\newtheorem{Thm}{Theorem}[section]
\newtheorem{Cor}[Thm]{Corollary}						
\newtheorem{Prop}[Thm]{Proposition}					
\newtheorem{Lemma}[Thm]{Lemma}
\theoremstyle{definition}
\newtheorem{Def}[Thm]{Definition}						
\numberwithin{equation}{section}								
\def\q{\mathbb{Q}}								
\def\r{\mathbb{R}}								
\def\z{\mathbb{Z}}
\def\h{\mathbb{H}}								
\newcommand\inv{^{-1}}								
\newcommand {\ignore}[1]  {}
\def\lg{\Lambda(\Gamma)}	
\begin{document}

\begin{center} Limit Points Badly Approximable by Horoballs
\end{center}
\begin{center}
Dustin Mayeda\\  Department of Mathematics, UC Davis, Davis CA 95616, USA\\
{\it dmayeda@math.ucdavis.edu}

\vspace{.25 in} 

Keith Merrill\\ Department of Mathematics, Brandeis University, Waltham MA 02454, USA\\
{\it merrill2@brandeis.edu}
\end{center}

\begin{abstract} For a proper, geodesic, Gromov hyperbolic metric space $X$, a discrete subgroup of isometries $\Gamma$ whose limit set is uniformly perfect, and a disjoint collection of horoballs $\{ H_j \}$, we show that the set of limit points badly approximable by $\{ H_j \}$ is absolutely winning in the limit set $\Lambda(\Gamma)$. As an application, we deduce that for a geometrically finite Kleinian group acting on $\h^{n+1}$, the limit points badly approximable by parabolics, denoted $BA(\Gamma)$, is absolutely winning, generalizing previous results of Dani and McMullen. As a consequence of winning, we show that $BA(\Gamma)$ has dimension equal to the critical exponent of the group. Since $BA(\Gamma)$ can alternatively be described as the limit points representing bounded geodesics in the quotient $\h^{n+1}/\Gamma$, we recapture a result originally due to Bishop and Jones.
\end{abstract}

\begin{section}{Introduction}
\begin{subsection}{Background and Main Results}
The deep connections between Diophantine approximation and hyperbolic geometry have been a subject of active interest for many years (see \cite{HP} and references therein). Classically, one principal object of study is the set of {\it badly approximable} numbers, $$BA = \left\{ x \in \r: \exists C > 0 \text{ such that } \left| x - \frac{p}{q}\right| > \frac{C}{q^2} \text{ for all } \frac{p}{q} \in \q\right\}.$$ This notion of approximation is of interest in light of Dirichlet's Theorem, which states that the opposite inequality holds for every point if $C$ is replaced by $1$. Historically, Louiville first shows that $BA$ is nonempty as a consequence of his theorem on approximation of algebraic numbers, in particular irrational quadratic numbers are badly approximable. Using a characterization of $BA$ via continued fractions, Borel and Bernstein showed that the Lebesgue measure of $BA$ is $0$. Nonetheless, it has Hausdorff dimension $1$ (\cite{J}) and it was known to Davenport that its intersection under finitely many translations is nonempty (see \cite{B} for an extensive historical discussion of $BA$).

Then in \cite{Sch}, Schmidt introduces a two-player game, henceforth called Schmidt's game, whose winning sets enjoy those and stronger properties (see $\S 2$ for definitions). In particular, Schmidt shows that winning sets in $\r$ are dense, have full dimension, and that the intersection with their images under countably many $C^1$ diffeomorphisms of $\r$ is nonempty. 

\ignore{
Because of the remarkable rigidity winning sets enjoy, Schmidt's game has been extended to a variety of contexts where winning sets exhibit properties like the ones above. The diversity of these settings can be seen in the following applications. In \cite{CCM} it is shown that the set of directions for which the Teichmuller flow remains bounded is winning, \cite{D} for bounded geodesics on homogeneous spaces, \cite{F} on fractals, \cite{BFK} for toral endorphisms avoiding an open subset of $T^n$, and \cite{ET} for winningness of badly approximable affine forms. In \cite{McM}, McMullen proposes a strengthening of the game whose winning subsets, said to be {\it absolutely winning}, enjoy even stronger rigidity properties than those mentioned above. In particular, absolutely winning sets are winning for the original game. \cite{KW} also...
(HOW DO WE HANDLE THIS SECTION? JUST REFERENCES?)
}

Because of the remarkable rigidity winning sets enjoy, Schmidt's game has been extended to a variety of contexts. The diversity of these settings can be seen in the following papers: \cite{CCM}, \cite{D}, \cite{F}, \cite{BFK}, \cite{ET}, \cite{KW}, and \cite{McM} (see $\S 2$ for a more detailed discussion of some of these results which will be needed in what follows). In particular, in \cite{McM}, McMullen defines a strengthening of the original game, called the absolute game, whose winning sets are said to be {\it absolutely winning} and which enjoy even stronger rigidity properties than winning sets for Schmidt's original game. 

In this paper we play McMullen's variant of the game on the limit set of a discrete group $\Gamma$ of isometries of a proper Gromov hyperbolic space $X$. We generalize the above notion of BA by considering the set of limit points which are badly approximable by a disjoint collection of horoballs. Our main result is as follows:
\begin{Thm}\label{main}
Let $X$ be a proper, geodesic, Gromov hyperbolic space, $\Gamma \subset \operatorname{Isom}(X)$ a non-elementary discrete subgroup such that $\Lambda(\Gamma)$ is uniformly perfect, and $\{ H_j \}$ a disjoint collection of horoballs. Then the set of limit points which are badly approximable with respect to $\{ H_j \}$ is absolutely winning in $\Lambda(\Gamma)$. 
\end{Thm}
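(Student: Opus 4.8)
The plan is to fix a basepoint $o\in X$, equip $\Lambda(\Gamma)$ with a visual metric $d$ of some sufficiently small parameter $\epsilon>0$ (so that $d(\xi,\eta)\asymp e^{-\epsilon(\xi\mid\eta)_o}$), and reduce the theorem to a combinatorial game on $\Lambda(\Gamma)$. First I would record what uniform perfectness provides: a constant $\lambda>1$ with $\big(B(\xi,r)\setminus B(\xi,r/\lambda)\big)\cap\Lambda(\Gamma)\neq\emptyset$ for all $\xi\in\Lambda(\Gamma)$ and $0<r<\operatorname{diam}\Lambda(\Gamma)$ --- this is the ``room to move'' Alice will need. For a horoball $H_j$ write $\xi_j\in\partial X$ for its center, $D_j$ for the distance in $X$ from $o$ to $H_j$, and $\mathcal{O}_t(H_j)$ for its depth-$t$ shadow seen from $o$, i.e.\ the set of endpoints of geodesic rays from $o$ that penetrate $H_j$ to depth at least $t$. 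A limit point is badly approximable with respect to $\{H_j\}$ exactly when it lies in $BA_C:=\Lambda(\Gamma)\setminus\bigcup_j\mathcal{O}_C(H_j)$ for some $C\ge0$, so the target set is $\bigcup_C BA_C$; it therefore suffices to produce, for each $\beta\in(0,1/3)$, a value $C=C(\beta)$ for which Alice wins McMullen's $\beta$-absolute game with target $BA_C\subseteq BA$.

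The geometric content would lie in two lemmas. A \emph{shadow lemma}: there should be $\kappa\ge1$, depending only on $\epsilon$ and the hyperbolicity constant, with
\[ B\big(\xi_j,\ \kappa^{-1}e^{-\epsilon(D_j+t)}\big)\cap\Lambda(\Gamma)\ \subseteq\ \mathcal{O}_t(H_j)\cap\Lambda(\Gamma)\ \subseteq\ B\big(\xi_j,\ \kappa\,e^{-\epsilon(D_j+t)}\big)\qquad(t\ge0), \]
reflecting that along a geodesic ray converging to the center of a horoball the penetration depth agrees with the Gromov product based at $o$ with the center, up to the entry distance $D_j$ and a bounded error. A \emph{separation lemma} extracting the disjointness hypothesis: for distinct disjoint horoballs $H_i,H_j$, disjointness should force $(\xi_i\mid\xi_j)_o\le\max(D_i,D_j)+O(1)$, and hence $d(\xi_i,\xi_j)\gtrsim e^{-\epsilon\max(D_i,D_j)}$, with implied constants depending only on $\epsilon$ and the hyperbolicity constant. (The point of this is that two horoballs whose shadows have comparable size have centers separated at exactly that scale.) Writing $\rho_j:=\kappa e^{-\epsilon(D_j+C)}$ for the radius in which $\mathcal{O}_C(H_j)$ sits, I would then deduce the key \emph{sparsity} fact: once $e^{\epsilon C}$ exceeds a constant built from $\kappa$, $\epsilon$ and $\beta$, every ball $B$ of radius $r$ meets at most one $\mathcal{O}_C(H_j)$ with $\rho_j\in(\beta^2 r,\beta r]$ --- two such would have centers within $O(r)$ of one another yet, by the separation lemma, at distance $\gtrsim_\beta e^{\epsilon C}r$.

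Given sparsity, Alice runs the now-standard strategy for badly approximable targets. Her invariant after Bob plays $B_i=B(x_i,r_i)$ is that no $\mathcal{O}_C(H_j)$ contains $B_i$. If there is the (necessarily unique) index $j$ with $\rho_j\in(\beta^2 r_i,\beta r_i]$ and $\mathcal{O}_C(H_j)\cap B_i\neq\emptyset$, she deletes a ball $A_{i+1}$ of radius $\le\beta r_i$ containing $\mathcal{O}_C(H_j)$ --- legal because $\rho_j\le\beta r_i$ --- which preserves the invariant and keeps every subsequent ball off that shadow; otherwise she makes an inessential legal move. Since $r_{i+1}\ge\beta r_i$, Bob cannot leap over the window of scales in which a given $\rho_j$ is active, and --- invoking the standard reduction for the absolute game, which again uses uniform perfectness --- one may assume $r_i\to0$; hence every shadow that ever meets the nested balls is deleted at its scale, and $\eta=\bigcap_i B_i$ lies in no $\mathcal{O}_C(H_j)$, i.e.\ $\eta\in BA_C\subseteq BA$. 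Since $\beta\in(0,1/3)$ was arbitrary, $BA$ is absolutely winning in $\Lambda(\Gamma)$.

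I expect the main obstacle to be the two geometric lemmas, and above all the separation lemma, in the generality of a proper geodesic Gromov hyperbolic space, where only coarse control of geodesics and horoballs is available: turning the purely combinatorial fact ``$H_i\cap H_j=\emptyset$'' into a uniform lower bound on $d(\xi_i,\xi_j)$ at a common scale is the crux, and is where the hyperbolicity constant and the visual parameter $\epsilon$ must be tracked carefully. The remaining ingredients --- the sparsity count and Alice's game play, including the routine fix for when Bob declines to shrink his balls --- I expect to be straightforward once the lemmas and the uniform perfectness of $\Lambda(\Gamma)$ are in hand.
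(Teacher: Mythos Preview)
Your approach is correct in its essentials and close in spirit to the paper's, but organized differently enough to be worth comparing.

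The paper first proves absolute winning on all of $\partial X$, then transfers to $\Lambda(\Gamma)$ via the general principle that absolutely winning sets restrict to diffuse subsets, using the lemma that uniformly perfect implies diffuse. You instead play directly on $\Lambda(\Gamma)$, invoking uniform perfectness only to guarantee Bob a legal move. More substantively, the source of ``at most one horoball per scale'' differs. You extract from disjointness a boundary \emph{separation lemma} $d(\xi_i,\xi_j)\gtrsim e^{-\epsilon\max(D_i,D_j)}$ and combine it with a shadow estimate to get sparsity of shadows at each scale. The paper bypasses this entirely: Alice checks whether the interior point $\gamma_{x_i}(-\log\rho_i)\in X$ lies in some $H_j$, and disjointness \emph{in $X$} makes that horoball unique immediately, with no boundary separation needed. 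The paper then uses tree approximation and quasiconvexity of horoballs to show that any deep penetration of $\gamma_x$ into a horoball is detected at some step $i$ with $\rho_i$ controlling the shadow radius. So the paper trades your separation lemma for a direct geometric argument inside $X$; your route is more boundary-intrinsic and the separation lemma is a useful statement on its own, but it is, as you anticipate, where the coarse constants must be tracked carefully.

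One small gap: your invariant ``no $\mathcal{O}_C(H_j)$ contains $B_i$'' is not established at $i=1$, since Bob's first ball may already sit inside a shadow with $\rho_j>\beta r_1$; equivalently, horoballs at scales above the initial radius never fall into any window $(\beta^2 r_i,\beta r_i]$ and are never deleted. This does not break the argument---by your own separation lemma and compactness of $\partial X$ there are only finitely many such $j$, and since your target is $BA=\bigcup_C BA_C$ rather than a fixed $BA_C$, it suffices that $\eta\neq\xi_j$ for those finitely many centers, which Alice can arrange with her otherwise ``inessential'' moves---but you should say so explicitly rather than claim $\eta\in BA_C$.
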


Of particular interest is the case where $X = \h^{n+1}$ and $\Gamma$ is a Kleinian group. We consider a $\Gamma$-invariant collection of horoballs based at the parabolic fixed points of $\Gamma$, generalizing the notion of approximation by parabolics. The set of such badly approximable limit points, denoted $BA(\Gamma)$, has been extensively studied by Patterson, Stratmann, Hill, and Velani in the case that $\Gamma$ is geometrically finite (\cite{P}, \cite{S}, \cite{SV}, \cite{HV}, and \cite{BDV}), and in particular, analogues of theorems of Dirichlet, Khintchine, and Jarnik-Besicovitch are known to hold. The primary motivation for this is that if we take $\Gamma = PSL(2,\z)$, then the set of points badly approximable by $\Gamma$ is exactly the set $BA$ introduced above.

In the case of a lattice $\Gamma$ acting on $\h^{n+1}$, it was shown by Dani \cite{D} that $BA(\Gamma)$ is winning in $\r^n$, and by McMullen \cite{McM} that it is absolutely winning with respect to the strengthening of the game he defines. In this conext our theorem can be interpreted as a further generalization of these results.
\begin{Cor}\label{main2}
Let $\Gamma \subset \operatorname{Isom}(\h^{n+1})$ be a non-elementary finitely generated Kleinian group such that there exists a $\Gamma$-invariant disjoint collection of horoballs based at the parabolic fixed points of $\Gamma$. Then $BA(\Gamma)$ is absolutely winning in $\lg$. This holds in particular for geometrically finite $\Gamma$.
\end{Cor}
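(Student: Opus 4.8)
The plan is to obtain Corollary~\ref{main2} directly from Theorem~\ref{main}, so the work is entirely a matter of checking the theorem's hypotheses in the present setting. Take $X = \h^{n+1}$ with its hyperbolic metric: this is proper, geodesic, and $\mathrm{CAT}(-1)$, hence Gromov hyperbolic, so the first three hypotheses are automatic. The group $\Gamma$ is non-elementary and discrete by assumption, and we let $\{H_j\}$ be the given $\Gamma$-invariant disjoint collection of horoballs based at the parabolic fixed points of $\Gamma$ (if $\Gamma$ has no parabolics this family is empty, $BA(\Gamma) = \lg$, and there is nothing to prove). For this choice of $\{H_j\}$ the phrase ``badly approximable with respect to $\{H_j\}$'' is by definition $BA(\Gamma)$. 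Thus the only substantive hypothesis left to verify is that $\lg$ is uniformly perfect; granting that, Theorem~\ref{main} immediately gives that $BA(\Gamma)$ is absolutely winning in $\lg$.

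For the uniform perfectness of $\lg$ I would invoke the classical fact that the limit set of any non-elementary discrete subgroup of $\operatorname{Isom}(\h^{n+1})$ is uniformly perfect, citing the literature or including a short proof of the following shape. If $\lg = S^n$ there is nothing to check, so assume not. Since $\Gamma$ is non-elementary it contains a loxodromic element $g$, with distinct fixed points $g^{\pm} \in \lg$. Given $x \in \lg$ and a scale $0 < r \le \operatorname{diam}(\lg)$, a standard ping-pong argument using $g$ together with the density of the $\Gamma$-orbit of $g^{+}$ in $\lg$ produces $\gamma \in \Gamma$ carrying the pair $\{g^{+}, g^{-}\}$ to a pair of points of $\lg$ lying near $x$ and of mutual separation comparable to $r$; since both images lie in $\lg$, at least one of them falls in the annulus $\{\, y : cr \le d(x,y) \le r \,\}$ for a constant $c$ depending only on $\Gamma$. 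Carrying out the comparison of these constants carefully is the only place in the corollary where any real care is needed.

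It remains to justify the ``in particular'' clause, namely that a $\Gamma$-invariant disjoint collection of horoballs based at the parabolic fixed points exists whenever $\Gamma$ is geometrically finite. Here I would appeal to the standard structure theory of cusps: geometric finiteness provides finitely many conjugacy classes of maximal parabolic subgroups $P_1, \dots, P_k$, with fixed points $p_1, \dots, p_k \in \lg$; the Margulis lemma (thick--thin decomposition) yields for each $i$ a horoball $B_i$ based at $p_i$ which is precisely invariant under $P_i$ in $\Gamma$, and after shrinking the finitely many $B_i$ one may arrange in addition that their $\Gamma$-orbits are pairwise disjoint. Then $\{\, \gamma B_i : 1 \le i \le k,\ \gamma P_i \in \Gamma/P_i \,\}$ is a $\Gamma$-invariant disjoint family of horoballs based at the parabolic fixed points of $\Gamma$, so the previous two paragraphs apply and finish the proof. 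Aside from the uniform-perfectness input, every ingredient is a standard fact about Kleinian groups, assembled around Theorem~\ref{main}.
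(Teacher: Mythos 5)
Your overall strategy -- reduce everything to Theorem~\ref{main}, verify uniform perfectness of $\lg$, and in the geometrically finite case manufacture the horoball collection from the cusp structure -- is exactly the route the paper takes. But there are two places where your write-up asserts something that is either false as stated or skips the one step that actually requires an argument.

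First, it is not true that ``badly approximable with respect to $\{H_j\}$'' is \emph{by definition} $BA(\Gamma)$. The definition of badly approximable by $\{H_j\}$ (Definition~2.6) asks for $\rho(x,\xi_j) > s\cdot R_j$ where $R_j$ is the radius of the \emph{shadow} of the horoball $H_j$, whereas $BA(\Gamma,p)$ asks for $|x-g(p)| > k(x,p)\,(1-|g(0)|)$. These are two different normalizations of the ``size'' of the approximating object, and relating them is the entire content of the paper's proof: one needs that the Euclidean radius $r_{g(p)}$ of the horoball based at $g(p)$ is comparable to its visual radius and satisfies $r_{g(p)} > C(1-|g(0)|)$ for a constant $C = C(\Gamma,p)$ (the paper cites Nicholls for this). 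Without that comparison you have only shown that the set of points badly approximable by $\{H_j\}$ is absolutely winning, not that it sits inside $BA(\Gamma)$. (Once the comparison is in hand, your single application of the theorem to the whole collection does work, since absolute winning is inherited by supersets; the paper instead applies the theorem to each parabolic orbit separately and uses Lemma~\ref{intersection}, but both are fine.)

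Second, your claimed ``classical fact'' that the limit set of \emph{any} non-elementary discrete subgroup of $\operatorname{Isom}(\h^{n+1})$ is uniformly perfect is false: uniform perfectness can fail for infinitely generated Kleinian groups, which is precisely why the hypothesis of finite generation appears in the corollary and why the paper states uniform perfectness (Theorem~\ref{unifperfect}, citing Jarvi--Vuorinen) only for finitely generated groups. Your ping-pong sketch never uses finite generation and gives no mechanism for producing constants uniform over all points $x\in\lg$ and all scales $r$; ``density of the orbit of $g^+$'' alone does not yield a scale-independent separation constant. The fix is simply to cite the Jarvi--Vuorinen result for finitely generated groups rather than attempt a proof of a more general (and false) statement. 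Your treatment of the ``in particular'' clause via the Margulis lemma and precisely invariant horoballs is correct and matches the paper's appeal to Bowditch.
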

Moreover, in the geometrically finite context we have an identification, implicitly contained in \cite{Bow}, that $BA(\Gamma)$ corresponds precisely to the set of endpoints of geodesic rays whose projection to the quotient manifold $\h^{n+1}/\Gamma$ remain bounded. Thus ergodicity of the geodesic flow with respect to the Patterson-Sullivan measure class implies that $BA(\Gamma)$ has measure $0$. Nonetheless, using a lemma communicated to us by Fishman, Simmons, and Urbanski, we show that the dimension of $BA(\Gamma)$ equals the critical exponent of $\Gamma$, thus recapturing a result of Bishop and Jones \cite{BJ}.

Finally, using the fact that the product of winning sets is again winning and the diffuseness of $\lg$, we show that the set of bounded geodesics in $\h^{n+1}/\Gamma$ is winning in $\lg \times \lg$. 
\end{subsection}

\begin{subsection}{Outline of the paper}

\indent In $\S 2$ we define the necessary definitions and lemmas needed to state and prove the main result.

In $\S 3$ we prove Theorem \ref{main}. The proof is broken into three steps. First we generalize a lemma from \cite{McM}, which states that the set of points in the boundary $\partial X$ which avoid the shadows of a small rescaling of the collection $\{ H_j \}$ is absolutely winning in $\partial X$. Next we show that any such boundary point is badly approximable by $\{ H_j \}$. Finally we recall a certain geometric property of the limit set $\lg$ which implies that absolute winningness is preserved under intersection. Therefore, the set of limit points badly approximable by $\{ H_j \}$ is absolutely winning in $\lg$. 

In $\S 4$, we specialize to the case $X = \h^{n+1}$, and investigate conditions under which the limit set $\lg$ is uniformly perfect and such a collection $\{ H_j \}$ exists. In particular, we show that for geometrically finite Kleinian groups both of these conditions hold. Therefore, Corollary \ref{main2} follows. 

Finally in $\S 5$ we prove that in the Kleinian group case, absolutely winning subsets of $\lg$ have dimension $\delta$, the critical exponent of the group and the dimension of the conical limit set. Thus we reprove the main result of \cite{BJ} in the case that $\Gamma$ is finitely generated and $\{ H_j \}$ exists. As noted previously, this holds in particular for geometrically finite Kleinian groups.

\end{subsection}

\begin{subsection}{Acknowledgements}
The authors would like to thank Dmitry Kleinbock for his patience and many helpful revisions, Ryan Broderick for his expertise in arguments about properties of winning sets, and Lior Fishman, David Simmons, and Mariusz Urbanski for furnishing the full dimension of winning sets. Finally, we would like to thank the referee for his many invaluable comments and simplifications of the main results.
\end{subsection}

\end{section}

\begin{section}{Definitions}
\begin{subsection}{Schmidt's Game}

In \cite{Sch}, Schmidt introduced and analyzed the following game between two players, whom we will call Alice and Bob. Let $(X,d)$ be a complete metric space, and fix parameters $0 < \alpha, \beta < 1$. Set $\Omega = X \times \r_+$, and define a partial ordering on $\Omega$ as follows: $$(x_2,\rho_2) \leq (x_1,\rho_1) \text{ if and only if } \rho_2 + d(x_1,x_2) \leq \rho_1.$$ Then Alice and Bob alternately choose pairs $w_i' = (x_i', \rho_i')$ and $w_i = (x_i, \rho_i),$ respectively, satisfying $$w_i' \leq w_i \text{ and } w_{i+1} \leq w_i'$$ with $$\rho_i' = \alpha \rho_i \text{ and } \rho_{i+1} = \beta \rho_i'$$  We may think of the pair $(x,\rho)$ as determining a ball $B(x,\rho) \subset (X,d)$, in which case the partial ordering $(x_1,\rho_1) \leq (x_2,\rho_2)$ implies that $B(x_2,\rho_2) \subset B(x_1,\rho_1)$ (but in fact can be strictly stronger for general metric spaces). Let us denote by $B_i$ the ball $B(x_i',\rho_i')$ and $A_i$ the ball $B(x_i,\rho_i)$. Since the balls are closed, nested, and $X$ is complete, the intersection $\cap_i B_i$ is a point $x_\infty \in X$. We say that a set $S \subset X$ is $(\alpha,\beta)$-{\it winning} if Alice has a strategy such that, no matter how Bob plays, $$ \bigcap_i B_i \cap S \ne \emptyset.$$ We call $S$ $\alpha$-{\it winning} if it is $(\alpha,\beta)$-winning for every $\beta > 0$, and is called {\it winning} if it is $\alpha$-winning for some $\alpha$. 

It was first noted in \cite{F} that when playing the game on a metric space $(X,d)$, it makes to sense to play the game on any closed subset $E \subset X$ by requiring Bob and Alice's balls to be centered on points of $E$. Since $E$ is closed, the unique point of intersection $\cap_i B_i$ will lie in $E$. It is not guaranteed, however, that simply because $S \subset X$ is winning, that $S \cap E$ is winning in $E$. 

As mentioned in the introduction, winning sets in many contexts exhibit strong properties like having full dimension, and the class of winning sets is closed under  bi-Lipschitz mappings, and perhaps most strikingly, countable intersections. This latter condition is used in \cite{Sch} to show that the set of real numbers normal to no base is a winning set, and hence has full dimension. Following \cite{F}, in \cite{BFKRW} the authors show that the set of such numbers belonging to any fractal supporting a "nice" measure, for instance the Cantor middle-third set, has full dimension in the fractal. It is this idea of supporting reasonable measures which allows the rigidity properties of winning sets to hold, and hence in turn to the diverse applications the game enjoys. Notably, Dani shows in \cite{D} that the set of bounded orbits on certain homogeneous spaces, while typically measure $0$, is nonetheless winning, and hence has full dimension. Similarly, it is shown in \cite{CCM} that the set of directions under which the Teichmuller flow remains bounded is winning. And in \cite{KW}, Schmidt's game, and the countable intersection property of winning sets, is used to prove the abundance of non-quasiunipotent orbits whose orbit closures avoid a countable collection of points, thus settling a conjecture of Margulis, \cite{Ma}.

In \cite{McM}, McMullen proposed the following variant of Schmidt's game, in which instead of choosing where Bob must play, Alice now chooses where he cannot: $$B_1 \supset B_1 - A_1 \supset B_2 \supset B_2 - A_2 \supset ...$$ subject to $$R(B_{i+1}) \geq \beta R(B_i), R(A_i) \leq \beta R(B_i),$$ with $\beta < 1/3$ fixed (McMullen defines the game for balls in $\r^n$, but the definition clearly makes sense in general).  Note that we define this variant by referring directly to closed balls in $X$, and not a partial ordering on $\Omega$--see the proof of Lemma \ref{abswin} below for a discussion on this. Similarly to before, we call a subset $S \subset X$ {\it absolutely winning} if there exists $\beta_0 \in (0,1/3)$ such that for every $\beta \leq \beta_0$, Alice has a strategy which guarantees $$\bigcap_i B_i \cap S \ne \emptyset$$ for the game with parameter $\beta$.  Note in this case that $\cap_i B_i$ may fail to be a point.

The reason for the stipulation $\beta < 1/3$ is to prevent Alice's strategy from leaving Bob with no legal plays. For instance, if $\beta \geq 1/3$ and Alice chooses the ball $A_i$ concentric with $B_i$ and with radius scaled by $\beta$, then there is no nested ball $B_{i+1}$ satisfying $$B_{i+1} \subset B_i - A_i ,$$ terminating the game prematurely. When we look to define the absolute game on a closed subset $E \subset X$ analogously to before, this problem becomes even more pronounced. That is, care must be taken to ensure that no choice of $A_i$ leaves Bob no legal plays $B_{i+1}$. 

The property needed to ensure that this cannot happen is discussed in \cite{BFKRW}, and is called diffuseness (in the terminology of \cite{BFKRW}, we will be considering $0$-dimensionally diffuse sets and $0$-dimensionally absolutely winning sets). Specifically, we call a closed subset $E$ of $X$ $\beta_0$-{\it diffuse} if for every $x \in E$ and $\rho$ sufficiently small, and for any ball $B(y,\beta_0 \rho)$ with $y \in X$, there exists $x' \in E$ such that $$B(x',\beta_0 \rho) \subset B(x,\rho) - B(y,\beta_0 \rho).$$ Then for the $\beta$-absolute game with $\beta \leq \beta_0$, no matter what Euclidean ball $B(y,\beta \rho)$ Alice's strategy removes, Bob always has at least one legal move. As usual, a set will be called {\it diffuse} if it is $\beta_0$-diffuse for some $\beta_0$ (Note that our terminology differs slightly from that used in \cite{BFKRW}, where our use of diffuseness is the consequence of their Lemma 4.3).

For diffuse sets we have the following, Proposition $4.9$ in \cite{BFKRW}:
\begin{Thm}\label{diffuse}
If $S \subset X$ is absolutely winning in $X$, then for any diffuse subset $E \subset X$, we have that $E \cap S$ is absolutely winning in $E$.
\end{Thm}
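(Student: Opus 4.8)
The plan is to show that for every sufficiently small $\beta$ Alice wins the $\beta$-absolute game on $E$ with target $S\cap E$, by running a single strategy that interleaves two tasks. The first task steers the play into $S$, by imitating a winning strategy for the absolute game on all of $X$. The second task forces the radii of Bob's balls down to zero, by periodically deleting a ball concentric with Bob's current ball; once $\bigcap_i B_i$ is a single point it automatically lies in $E$, since $E$ is closed and every $B_i$ is centered on a point of $E$.

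In detail: fix $\beta_0$ so that $E$ is $\beta_0$-diffuse, and $\beta_1\in(0,1/3)$ so that $S$ is $\beta'$-absolutely winning in $X$ for every $\beta'\le\beta_1$; I would then work with any $\beta\le\min\{\beta_0,\sqrt{\beta_1}\}$ and let $\sigma$ be a winning strategy for the $\beta^2$-absolute game on $X$. In the $\beta$-game on $E$, Alice plays a \emph{simulation move} on odd steps $i=2k-1$, namely $A_{2k-1}:=\sigma(B_{2k-1})$, and a \emph{shrinking move} on even steps $i=2k$, namely $A_{2k}:=B(x_{2k},\beta\rho_{2k})$ where $B_{2k}=B(x_{2k},\rho_{2k})$; both are legal because $R(\sigma(B_{2k-1}))\le\beta^2\rho_{2k-1}\le\beta\rho_{2k-1}$ and $R(A_{2k})=\beta\rho_{2k}=\beta R(B_{2k})$. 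Since $\beta\le\beta_0$, $\beta_0$-diffuseness guarantees Bob a legal reply centered in $E$ whenever the current radius is below the diffuseness scale, and the shrinking moves drive the radii below that scale after finitely many rounds (and if Bob is ever unable to reply the game terminates in Alice's favor); so Alice's strategy is legal and the play is infinite.

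Now I would check that both tasks succeed. For the first: since the shrinking move at step $2k$ keeps Bob inside $B_{2k}$, we have $B_{2k+1}\subseteq B_{2k}\subseteq B_{2k-1}-A_{2k-1}$ and $R(B_{2k+1})\ge\beta R(B_{2k})\ge\beta^2\rho_{2k-1}$, so the subsequence $\widetilde B_k:=B_{2k-1}$ together with Alice's replies $\sigma(\widetilde B_k)$ forms a legal play of the $\beta^2$-absolute game on $X$ in which Alice follows the winning strategy $\sigma$; hence $\bigcap_k\widetilde B_k\cap S\ne\emptyset$, which equals $\bigcap_i B_i\cap S$ by nestedness. For the second: after the concentric deletion $A_{2k}$, Bob's reply $B_{2k+1}$ lies in $B(x_{2k},\rho_{2k})$ and misses $B(x_{2k},\beta\rho_{2k})$, and a routine geometric estimate in the ambient metric space then gives $\rho_{2k+1}\le(1-\beta)\rho_{2k}\le(1-\beta)\rho_{2k-1}$, so $\rho_i\to 0$ and $\bigcap_i B_i=\{x_\infty\}$ with $x_\infty=\lim_i x_i\in E$. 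Combining the two, $x_\infty\in S\cap E\cap\bigcap_i B_i$, as required.

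The main obstacle I anticipate is exactly the interaction of the two tasks — ensuring that inserting the concentric shrinking moves does not invalidate the simulated game on $X$. The device above handles this: the simulated game is played with the worse parameter $\beta^2$ and only Bob's balls $B_{2k-1}$ are recorded as its moves, so the intervening shrinking move is absorbed into the containment $B_{2k+1}\subseteq B_{2k-1}-A_{2k-1}$; the lost factor of $\beta$ in the radii is harmless because $S$ is assumed absolutely winning for all small parameters. A secondary point is the ``$\rho$ sufficiently small'' clause in the definition of diffuseness, which is why one must argue the radii are driven down before invoking it.
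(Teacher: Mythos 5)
Your argument is correct, and in fact it is essentially the standard proof of this statement; note that the paper itself does not prove Theorem \ref{diffuse} but quotes it as Proposition 4.9 of \cite{BFKRW}, so there is no in-paper proof to compare against. Your two devices --- running the ambient strategy at the degraded parameter $\beta^2$ on the subsequence of Bob's odd-indexed balls, and interleaving concentric deletions to force $\rho_i \to 0$ so that the outcome is the limit of centers and hence lies in the closed set $E$ --- are exactly what is needed, and you correctly identify why the shrinking moves are not optional: without them $\bigcap_i B_i \cap S \neq \emptyset$ gives a point of $S$ that need not lie in $E$. The legality bookkeeping ($\beta^2\rho_{2k-1} \le \beta\rho_{2k-1}$ for Alice, $R(B_{2k+1}) \ge \beta^2 R(B_{2k-1})$ for the simulated game, and $\beta \le \beta_0$ so that diffuseness hands Bob a reply even to the concentric deletion) all checks out.

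Two small points deserve care. First, your ``routine geometric estimate'' $\rho_{2k+1} \le (1-\beta)\rho_{2k}$ is genuinely routine only if ball containment and disjointness are interpreted formally through centers and radii (as in the Schmidt-game partial order $d(x,x') + \rho' \le \rho$, which gives $\rho_{2k+1} < \tfrac{1-\beta}{2}\rho_{2k}$), or in spaces such as $\r^n$ where set-theoretic containment of genuine balls controls the radii; the paper's declaration that the absolute game is defined ``by referring directly to closed balls'' makes this step metric-space--dependent, and in a completely general metric space set containment alone does not bound $\rho_{2k+1}$ away from $\rho_{2k}$ by a definite factor. You should state which convention you are using. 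Second, the clause ``if Bob is ever unable to reply the game terminates in Alice's favor'' is a convention that should be made explicit (it is the convention in \cite{BFKRW}); combined with the observation that Bob may always open with a ball below the diffuseness scale, it disposes of the initial rounds where diffuseness has not yet been invoked. Neither point is a gap in the mathematical content.
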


We will also need the following lemmas from \cite{McM}:

\begin{Lemma}\label{abswin}
 Every absolutely winning set is winning.
\end{Lemma}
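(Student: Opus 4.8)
The plan is to prove the slightly stronger statement that an absolutely winning set $S$ is $\alpha$-winning for every $\alpha < 1/3$, by having Alice, in the $(\alpha,\beta)$-game, simulate a winning strategy for the absolute game. So fix $\alpha \in (0,1/3)$ and $\beta \in (0,1)$; let $\beta_0 \in (0,1/3)$ witness that $S$ is absolutely winning, so that Alice wins the $\beta'$-absolute game for every $\beta' \le \beta_0$, and set $\beta' = \min\{\beta_0,\, \alpha\beta\}$, fixing a winning strategy $\sigma$ for Alice in the $\beta'$-absolute game on $X$. Alice will play the $(\alpha,\beta)$-game while maintaining, in parallel, a run of the $\beta'$-absolute game governed by $\sigma$.

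Here is the dictionary. When Bob plays $A_i = B(x_i,\rho_i)$ in the $(\alpha,\beta)$-game, Alice regards it as Bob's $i$-th move in the absolute game, and $\sigma$ prescribes a ball $C_i$, of radius at most $\beta'\rho_i$, to be deleted. Alice must now play $B_i = B(x_i',\alpha\rho_i)$ with $w_i' \le w_i$, i.e. with $d(x_i,x_i') \le (1-\alpha)\rho_i$; the triangle inequality then forces $B_i \subseteq A_i$. She chooses $x_i'$ so that in addition $B_i \cap C_i = \emptyset$. This is possible because the deleted ball has radius at most $\beta'\rho_i \le \alpha\beta\rho_i$, so even in the worst case (which in $\r^n$ is $C_i$ concentric with $A_i$) the set $A_i \setminus C_i$ contains a ball of radius $\tfrac12(1-\alpha\beta)\rho_i > \alpha\rho_i$, the last inequality being exactly $\alpha(2+\beta) < 1$, which holds since $\alpha < 1/3$ and $\beta < 1$; moreover such a ball may be taken with center within $(1-\alpha)\rho_i$ of $x_i$, so that $w_i' \le w_i$ is respected. (This geometric room-making is the only real content; it is cleanest precisely because we phrased the absolute game directly in terms of closed balls rather than the order on $\Omega$, so there is no discrepancy between ``$B_i \subseteq A_i$'' and ``$w_i' \le w_i$'' beyond what the triangle inequality already provides.)

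Now Bob plays $A_{i+1} = B(x_{i+1},\rho_{i+1})$ in the $(\alpha,\beta)$-game, with $\rho_{i+1} = \alpha\beta\rho_i$ and $w_{i+1} \le w_i'$, hence (triangle inequality again) $A_{i+1} \subseteq B_i$, and in particular $A_{i+1} \subseteq A_i \setminus C_i$ since $B_i$ was chosen disjoint from $C_i$. Thus declaring $A_{i+1}$ to be Bob's $(i+1)$-st move in the absolute game is legal: the inclusion $A_{i+1} \subseteq A_i \setminus C_i$ holds, and $R(A_{i+1}) = \alpha\beta\rho_i \ge \beta'\rho_i = \beta'R(A_i)$. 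Iterating, $A_1 \supseteq A_1 \setminus C_1 \supseteq A_2 \supseteq A_2 \setminus C_2 \supseteq \cdots$ is a legal run of the $\beta'$-absolute game in which Alice follows $\sigma$, so $\bigcap_i A_i \cap S \ne \emptyset$. But $\rho_i = (\alpha\beta)^{i-1}\rho_1 \to 0$, so $\bigcap_i A_i$ equals $\bigcap_i B_i$ and is the single point $x_\infty$; hence $x_\infty \in S$. As Bob's play in the $(\alpha,\beta)$-game was arbitrary, Alice has won, so $S$ is $(\alpha,\beta)$-winning, and since $\alpha < 1/3$ and $\beta \in (0,1)$ were arbitrary, $S$ is $\alpha$-winning for all $\alpha < 1/3$, in particular winning.

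The only real obstacle is pinning down the geometric room-making step at the level of generality needed: in $\r^n$ — or any ambient space in which a ball minus a small concentric ball contains a definite-sized inscribed ball — it is an elementary estimate, but arranging that Alice's center $x_i'$ also meets the order constraint $w_i' \le w_i$ needs a little care; everything else is bookkeeping with the radius ratios $\rho_{i+1}/\rho_i = \alpha\beta$ and the single inequality $\beta' \le \alpha\beta$.
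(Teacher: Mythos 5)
Your proof is correct and follows essentially the same route as the paper's: Alice translates the absolute-game strategy into Schmidt's game by placing her ball of radius $\alpha\rho_i$ inside Bob's ball minus the prescribed deleted ball (possible since $\alpha<1/3$), so that Bob's subsequent Schmidt moves constitute a legal run of the absolute game whose intersection point must lie in $S$. If anything you are more careful than the paper's one-paragraph version, which conflates the Schmidt parameter with the absolute-game parameter and does not explicitly check that Bob's radii satisfy the lower bound $R(A_{i+1})\ge \beta' R(A_i)$; your choice $\beta'=\min\{\beta_0,\alpha\beta\}$ handles exactly that bookkeeping.
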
 
\begin{proof} Let $B_i = B(x_i,\rho_i)$ denote Bob's $i^{th}$ choice as before, and $A_i = B(y_i, r_i)$ Alice's play as per the strategy, so that  $r_i \leq \beta \rho_i$. Then by definition of absolutely winning, there is at least one point $x_{i+1}$ such that $$B(x_{i+1},\beta \rho_i) \subset B(x_i,\rho_i) - B(y_i,r_i).$$ But then it follows that $w_i' = (x_{i+1}, \beta \rho_i) \leq w_i = (x_i, \rho_i)$. So setting Alice's move in the original game as $B(x_{i+1},\beta \rho_i)$ shows that $E$ is in fact $\beta$-winning. 
\end{proof}

\begin{Lemma} \label{intersection}
The countable intersection of absolutely winning sets is again absolutely winning.
\end{Lemma}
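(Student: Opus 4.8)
The plan is to run, in parallel inside a single $\beta$-absolute game, a winning strategy for each $S_k$ on a reserved infinite arithmetic progression of turns — the standard ``juggling'' argument, as in \cite{McM}. Write $S_1,S_2,\dots$ for the given sets, and for each $k$ fix $\beta_k\in(0,1/3)$ such that Alice has a winning strategy for the $\gamma$-absolute game on $S_k$ for all $\gamma\le\beta_k$; this is exactly the content of absolute winning. I will show $\bigcap_k S_k$ is $\beta$-absolutely winning for every $\beta\le 1/4$. Fix such a $\beta$ and partition the turns $\{1,2,\dots\}$ by $2$-adic valuation into classes $C_j=\{2^j(2m+1):m\ge 0\}$, $j\ge 0$; each $C_j$ is an arithmetic progression with common difference $2^{j+1}$, and $C_0$ is the set of odd turns. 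Since $\beta^N\to 0$, choose a strictly increasing sequence $j_1<j_2<\cdots$ of positive integers with $\beta^{2^{j_k+1}}\le\beta_k$ for every $k$, and call turn $i$ an ``$S_k$-turn'' if $i\in C_{j_k}$ and a ``shrinking turn'' otherwise; note every odd turn is a shrinking turn.

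Alice plays as follows. On a shrinking turn, after Bob plays $B_i=B(x_i,r_i)$ she deletes the concentric ball $B(x_i,\beta r_i)$; then Bob's $B_{i+1}$ must lie in $B(x_i,r_i)\setminus B(x_i,\beta r_i)$, which forces its radius to be at most $(1-\beta)r_i$. Since at least every other turn is a shrinking turn, $r_i\to 0$, so $\bigcap_i B_i$ reduces to a single point $p$. On an $S_k$-turn $i\in C_{j_k}$, Alice follows her winning strategy for the $\gamma_k$-absolute game on $S_k$, where $\gamma_k:=\beta^{2^{j_k+1}}\le\beta_k$, reading the subsequence $(B_i)_{i\in C_{j_k}}$ of Bob's balls as Bob's moves in that game (her replies being her moves $A_i$ at those same turns). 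This is a legal play: consecutive indices in $C_{j_k}$ differ by $2^{j_k+1}$, so the per-turn bound of the ambient game gives $R(B_{i'})\ge\beta^{2^{j_k+1}}R(B_i)=\gamma_k R(B_i)$; the $B_i$ are nested so $B_{i'}\subset B_i\setminus A_i$; and $\gamma_k<1/3$. In the other direction, $\gamma_k=\beta^{2^{j_k+1}}\le\beta$, so the move coming from this strategy has radius $\le\gamma_k R(B_i)\le\beta R(B_i)$ and is a legal move of the ambient $\beta$-game.

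Because Alice's play on $C_{j_k}$ wins the $\gamma_k$-game for $S_k$, we get $\bigcap_{i\in C_{j_k}}B_i\cap S_k\ne\emptyset$. But $C_{j_k}$ is cofinal in $\{1,2,\dots\}$ and the $B_i$ decrease, so $\bigcap_{i\in C_{j_k}}B_i=\bigcap_i B_i=\{p\}$, hence $p\in S_k$. As this holds for all $k$, $p\in\bigcap_k S_k$, and $p\in\bigcap_i B_i$; thus $\bigcap_i B_i\cap\bigcap_k S_k\ne\emptyset$. This proves $\bigcap_k S_k$ is $\beta$-absolutely winning, and since $\beta\le 1/4$ was arbitrary, it is absolutely winning.

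I expect two points to require care. First, the schedule: because the $\beta_k$ may tend to $0$ there is no uniform gap, so the gaps $2^{j_k+1}$ must be allowed to grow while each $S_k$ is still visited infinitely often — the $2$-adic partition is a clean way to arrange this. Second, and more essential, the individual winning guarantees above are all assertions about one and the same set $\bigcap_i B_i$, and they collapse to a single common point only because the interleaved shrinking moves make $\bigcap_i B_i$ a point; this is the one place the argument uses geometry of the underlying space (it is immediate in $\r^n$, where the absolute game is originally formulated).
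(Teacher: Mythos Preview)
The paper does not supply a proof of this lemma; it is stated with attribution to \cite{McM} and used as a black box. Your write-up is the standard ``splicing'' (or ``juggling'') argument that underlies the countable-intersection property for Schmidt-type games, and it is correct.

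Two brief remarks comparing your proof with what is in \cite{McM}. First, McMullen establishes a monotonicity principle for the absolute game in $\r^n$: $\beta$-absolutely winning for one $\beta<1/3$ implies $\beta'$-absolutely winning for every $\beta'<1/3$. With that in hand the intersection argument is simpler, since one may take a single common parameter for all $S_k$ and splice on a fixed partition of the turns. Your version bypasses that step by letting the gaps $2^{j_k+1}$ grow so that $\gamma_k=\beta^{2^{j_k+1}}\le\beta_k$; this is a clean way to absorb non-uniform thresholds and is arguably more robust. Second, your insertion of ``shrinking'' turns (removing the concentric $\beta$-ball on every odd step) to force $\bigcap_i B_i$ down to a single point is exactly the right device: without it one only knows $\bigcap_{i\in C_{j_k}}B_i\cap S_k\ne\emptyset$ for each $k$, with no common witness. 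Your closing caveat is also well placed: the inequality $r_{i+1}\le(1-\beta)r_i$ after a shrinking turn uses that in $\r^n$ a ball contained in $B(x,r)\setminus B(x,\beta r)$ has radius at most $\tfrac{1-\beta}{2}r$; in an arbitrary complete metric space this need not hold, which is precisely why McMullen formulates the absolute game in $\r^n$.
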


This latter lemma highlights one of the primary rigidity properties both winning and absolutely winning sets exhibit, and one which will be essential in $\S 4$. 

We now recall the following definition for metric spaces. We say that a metric space $(X,d)$ is {\it uniformly perfect} if there exists a constant $0 < \nu < 1$ such that for any metric ball $B = B(x,R)$ such that $X - B \ne \emptyset$, we have $B(x,R) - B(x,\nu R) \ne \emptyset$. We call a subset $E \subset X$ {\it uniformly perfect} if it is uniformly perfect as a metric space with the induced metric. 

The following lemma shows the relation between uniform perfectness and the notion of diffuseness discussed above:

\begin{Lemma}\label{diffuse is u.p.}
 Let $E \subset \r^n$ be a uniformly perfect subset with associated parameter $\nu$. Then for any $$\beta < \min\left\{1-\nu,\frac{\nu^2}{4}\right\},$$ $E$ is $\beta$-diffuse.
\end{Lemma}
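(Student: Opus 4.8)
The plan is to unwind both definitions so that the statement becomes a concrete geometric claim about closed Euclidean balls, and then settle it by a short case analysis in which the only nontrivial input is uniform perfectness. Fix $\beta < \min\{1-\nu,\nu^2/4\}$, a point $x \in E$, a small radius $\rho$, and an arbitrary centre $y \in \r^n$; we must produce $x' \in E$ with $B(x',\beta\rho) \subset B(x,\rho) - B(y,\beta\rho)$. Since the balls are closed, this is equivalent to the two conditions $|x'-x| \le (1-\beta)\rho$ (which yields $B(x',\beta\rho) \subset B(x,\rho)$) and $|x'-y| > 2\beta\rho$ (which yields $B(x',\beta\rho) \cap B(y,\beta\rho) = \emptyset$).

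First I would dispose of the case $|x-y| > 2\beta\rho$: here $x' = x$ already works, the first condition being automatic (as $\beta < 1$) and the second being exactly the hypothesis of this case. The substantive case is $|x-y| \le 2\beta\rho$, in which $B(y,\beta\rho)$ sits near the centre of $B(x,\rho)$ and we must step away from it. Here I would apply uniform perfectness to the ball $B(x,R)$ with $R = (1-\beta)\rho$. This is legitimate provided $E \not\subset B(x,R)$; and since a uniformly perfect set has positive diameter $D$, every point of $E$ admits another point of $E$ at distance $> D/4$ from it, so as soon as $(1-\beta)\rho < D/4$ — and this is precisely what the phrase ``$\rho$ sufficiently small'' buys us — uniform perfectness furnishes $x' \in E$ with $\nu R < |x'-x| \le R$. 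The upper bound gives $|x'-x| \le (1-\beta)\rho$, the first condition; for the second, the triangle inequality gives $|x'-y| \ge |x'-x| - |x-y| > \nu(1-\beta)\rho - 2\beta\rho$, so it suffices to know that $\nu(1-\beta) \ge 4\beta$, i.e.\ $(4+\nu)\beta \le \nu$.

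It then remains to check that the hypothesis $\beta < \min\{1-\nu,\nu^2/4\}$ forces $(4+\nu)\beta < \nu$, which is elementary. If $1-\nu \le \nu^2/4$ then $\nu^2 + 4\nu \ge 4$, and from $\beta < 1-\nu$ we get $(4+\nu)\beta < (4+\nu)(1-\nu) = 4 - 3\nu - \nu^2 \le \nu$; if instead $\nu^2/4 < 1-\nu$ then $\nu^2 + 4\nu < 4$, and from $\beta < \nu^2/4$ we get $(4+\nu)\beta < (4+\nu)\nu^2/4 < \nu$. Either way $(4+\nu)\beta < \nu$, which closes the argument. I expect no genuine geometric obstacle: the delicate points are only the constant bookkeeping just described, the observation that ``$\rho$ sufficiently small'' is exactly what makes uniform perfectness applicable to $B(x,R)$, and the routine check that every ball-containment and ball-disjointness used above is inferred in the direction valid in any metric space, namely directly from the triangle inequality.
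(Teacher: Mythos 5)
Your proof is correct and follows essentially the same route as the paper: the same two-case split on whether $d(x,y) > 2\beta\rho$, with uniform perfectness supplying a point of $E$ in an annulus about $x$ in the nontrivial case. The only difference is minor: the paper applies uniform perfectness twice to land in the annulus $\nu^2\rho < d(x,x') \le \nu\rho$, so that the two halves of the hypothesis $\beta < \min\left\{1-\nu,\nu^2/4\right\}$ each yield one of the required inequalities directly, whereas you apply it once to $B(x,(1-\beta)\rho)$ and absorb both halves into the single (correctly verified) estimate $(4+\nu)\beta < \nu$.
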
 
\begin{proof} Fix $x \in E$, and let $\rho > 0$ be sufficiently small so that $E \not \subset B(x,\rho)$. Let $y \in \r^n$ be arbitrary. We break the proof into two cases. 
\begin{enumerate}
\item If $d(x, y) > 2\beta\rho$, then it is clear that $B(x,\beta\rho)$ has the requisite property, i.e. we can take $x=x'$.
\item If $d(x,y) \leq 2\beta \rho$, then it suffices to produce a point $x' \in E$ such that $4\beta \rho < d(x,x') < (1-\beta)\rho$, since then $B(x',\beta \rho)$ works. But by uniform perfectness, since $E - B(x,\rho) \ne \emptyset$, for every $n$, we can find $$x_n \in E \mbox{ such that } \nu^n \rho < d(x,x_n) <\nu^{n-1}\rho$$
For our choice of $\beta$, taking $n=2$ we have $$4\beta \rho < \nu^2 \rho < d(x,x_2) < \nu \rho < (1-\beta)\rho$$ So we let $x' = x_2 \in E$ and use the ball $B(x',\beta \rho)$. 
\end{enumerate}
\end{proof}
\end{subsection}

\begin{subsection}{Gromov Hyperbolic Spaces}
Let $X$ be a metric space and recall that $X$ is called {\it proper} if all closed balls are compact. By a {\it geodesic} in $X$ we mean an isometric embedding of some subset $[a,b] \subset \r$ into $X$.  We say that $X$ is {\it geodesic} if every pair of points in $X$ is connected by a geodesic.  For $p, q \in X$ a geodesic space we let $[p,q]$ denote a geodesic from $p$ to $q$, even though the geodesic may not be unique.  Now we call $X$ $\delta$-{\it hyperbolic} if $X$ is geodesic and for any three points $a, b, c \in X$ we have that $$[b,c] \subset N_\delta([a,b]) \cup N_\delta([a,c]),$$ where $N_\delta(A)$ for any subset $A$ of $X$ is the $\delta$-neighborhood of A.  

For the rest of the paper $X$ will denote a proper geodesic $\delta$-hyperbolic metric space and $o$ a fixed basepoint in $X$. Define an equivalence relation $\sim$ on the set of geodesic rays (i.e. isometric embeddings of $[0,\infty)$) as follows: $\gamma \sim \sigma$ if and only if 

\begin{center}  $d(\gamma(t), \sigma(t)) < C$ for some $C > 0$ for all $t$ sufficiently large.
\end{center}

The {\it boundary} of $X$, denoted by $\partial X$, is defined as the set of equivalence classes of the above relation. The {\it Gromov product} of two points $x, y \in X$ with respect to the basepoint $o$ is defined as 

\begin{center}  $(x|y):=\frac{1}{2}(d(x,o) + d(y,o) - d(x,y))$
\end{center}

In general, the Gromov product measures the failure of the triangle inequality to be an equality. In particular, if $X$ is a tree then $(x|y)$ is the length of the overlap between the unique geodesics from $o$ to $x$ and from $o$ to $y$.  The Gromov product can be extended to the boundary by considering the following equivalent definition of $\partial X$.  A sequence $(x_i)$ of points in $X$ is called {\it convergent at infinity} if

\begin{center}  $\displaystyle\lim_{i,j\to\infty}(x_i|x_j) = \infty$.
\end{center}

We say that two sequences $(x_i)$ and $(y_i)$ are equivalent if $\lim_{i,j\to\infty}(x_i|y_j) = \infty$.  The boundary is then defined to be the set of equivalence classes and the Gromov product on the boundary is defined as $$(\xi|\nu):=\inf \displaystyle\liminf_{i,j\to\infty}(x_i|y_j) \text{ for } \xi,\nu \in \partial X$$ where the infimum is taken over all sequences $(x_i)$ and $(y_j)$ which represent $\xi$ and $\nu$ respectively.

The boundary $\partial X$ supports a family of metrics called {\it visual metrics} which induce the same topology.  A visual metric is any metric $\rho$ which satisfies 
\begin{center}
$\frac{1}{C}a^{-(x|y)} < \rho(x,y) < Ca^{-(x|y)}$ for some $C > 1$ and $a > 1$ and all $x,y \in \partial X.$
\end{center}
See Proposition 3.21 in chapter III.H of \cite{BH} for the existence of visual metrics. 

 For a geodesic ray $\gamma$ define the {\it Busemann function} $b_\gamma(z) = \lim_{t\to\infty}(d(\gamma(t),z) - t) $ where $z \in X$.  A {\it horoball} $H$ based at $\gamma(\infty) \in \partial X$ is a subset of $X$ of the form $H = b_\gamma^{-1}(-\infty, s]$ for some number $s$.  We define the {\it shadow} of a subset $A \subset X$, denoted by $Sh(A)$, to be the collection of endpoints of geodesic rays starting at $o$ which intersect A.  Let $H$ be a horoball based at $\xi$ and let $R_\xi > 0$ be the {\it radius} of $H$, that is the infimum such that $Sh(H) \subset B(\xi,R_\xi)$.  For $0 < s <1$ we define the {\it scaling} of $H$ by $s$, denoted $sH$, to be the union of horoballs whose shadows are contained in $B(\xi,sR_\xi)$.  To a ball $B(\gamma(\infty),r) \subset \partial X$, we assign the horoball $H_{\gamma(\infty)} = b_{\gamma}^{-1}(-\infty, \log r]$. 

We now give the main definition of the paper:
\begin{Def} Let $\{ H_j \}$ be a collection of horoballs, based at $\{ \xi_j \}$ with radius $\{R_j \}$. We say that a boundary point $x \in \partial X$ is {\it badly approximable} by $\{ H_j \}$ if there exists $s > 0$ such that $\rho(x,\xi_j)  > s \cdot R_j$ for every $j$. 
\end{Def} 

Let $\Gamma$ be a group of isometries of $X$ which act properly discontinuously on $X$, i.e. the set $\{\gamma \in \Gamma: B(x,r) \cap \gamma B(x,r) \neq \emptyset\}$ is finite for all $x \in X$ and $r > 0$.  Note that $\Gamma$ therefore acts on $\partial X$ by $\gamma \cdot (x_i) = (\gamma \cdot x_i)$ and define the limit set $\lg \subset \partial X$ of the action as

\begin{center}  $\lg = \{(x_i) \in \partial X | x_i = \gamma_i \cdot o, \gamma_i \in \Gamma, 1 \leq i < \infty\}.$
\end{center}

It can be shown that $\lg$ consists of either $0, 1, 2$, or infinitely many points.  If $\lg$ contains infinitely many points then we say that $\Gamma$ is {\it nonelementary}.

We have mentioned that in general geodesics may not be unique, but by the following proposition, such a choice of geodesic will not affect our results. 

\begin{Prop}(Stability of geodesics) \label{stability}
Let $X$ be a $\delta$-hyperbolic space and suppose that $\gamma$ and $\sigma$ are two geodesics which share the same endpoints.  Then we have $\gamma \subset N_{2\delta}(\sigma)$ and $\sigma \subset N_{2\delta}(\gamma)$.
\end{Prop}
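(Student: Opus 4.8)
The plan is to obtain the proposition as a direct consequence of the defining property of $\delta$-hyperbolicity, namely $[b,c] \subset N_\delta([a,b]) \cup N_\delta([a,c])$ for every geodesic triangle, after separating the cases according to whether the shared endpoints lie in $X$ or in $\partial X$.

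First suppose $\gamma$ and $\sigma$ are geodesic segments with common endpoints $p, q \in X$. If $p = q$ both are the single point $\{p\}$ and there is nothing to prove, so assume $p \neq q$ and parametrize $\gamma \colon [0,L] \to X$ by arc length, with $L = d(p,q)$, $\gamma(0) = p$, $\gamma(L) = q$. Choose any interior point $x = \gamma(t)$ with $0 < t < L$. Then $\gamma|_{[0,t]}$ is a geodesic from $x$ to $p$, $\gamma|_{[t,L]}$ is a geodesic from $x$ to $q$, and the union of their images is the image of $\gamma$. Applying the $\delta$-slim triangle condition to the triangle with vertices $x, p, q$, taking $x$ as the distinguished vertex so that $\sigma$ serves as the opposite side $[p,q]$, gives
$$\sigma \subset N_\delta\big(\gamma|_{[0,t]}\big) \cup N_\delta\big(\gamma|_{[t,L]}\big) \subset N_\delta(\gamma) \subset N_{2\delta}(\gamma),$$
and interchanging the roles of $\gamma$ and $\sigma$ yields $\gamma \subset N_{2\delta}(\sigma)$. (The argument in fact produces $N_\delta$, but only $N_{2\delta}$ is claimed.)

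When a shared endpoint lies in $\partial X$---say $\gamma$ and $\sigma$ are rays with $\gamma(0) = \sigma(0) = p$ and the same endpoint at infinity---I would reduce to the segment case by truncation. Since $\gamma$ and $\sigma$ represent the same boundary point, the points $\gamma(n)$ and $\sigma(n)$ stay within a bounded distance $C$ of one another. Applying the $\delta$-slim condition to the triangle with vertices $p$, $\gamma(n)$, $\sigma(n)$, whose connecting side $[\gamma(n),\sigma(n)]$ has length at most $C$, and noting that for a fixed point $x = \gamma(t)$ on $\gamma$ we have $d(x,\{\gamma(n),\sigma(n)\}) \to \infty$ as $n \to \infty$, we conclude that once $n$ is large the point $x$ cannot lie in the $\delta$-neighborhood of that short side, and hence lies in $N_\delta(\sigma|_{[0,n]}) \subset N_{2\delta}(\sigma)$; properness of $X$ is used to control the truncated geodesics in the limit. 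Swapping $\gamma$ and $\sigma$ gives the reverse inclusion, and the case of bi-infinite geodesics sharing both ideal endpoints is handled identically by truncating at both ends. The segment case is immediate from the definition, and the only step requiring genuine (though routine) bookkeeping is this limiting argument for geodesics with ideal endpoints, which I would single out as the main, if modest, obstacle.
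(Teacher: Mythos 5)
The paper states this proposition without proof (it is quoted as a standard fact about $\delta$-hyperbolic spaces), so there is nothing to compare against; judged on its own, your argument is correct and is the standard one. The segment case is clean: the degenerate triangle with vertices $x=\gamma(t)$, $p$, $q$, whose two sides through $x$ are the halves of $\gamma$ and whose third side is $\sigma$, immediately gives $\sigma \subset N_\delta(\gamma)$, and symmetrically. The ray case also works as written, and you do not actually need properness there, since the sides $[p,\sigma(n)]$ are genuine subsegments of $\sigma$; the only input is that $d(x,[\gamma(n),\sigma(n)])\to\infty$ while $\delta$ is fixed. The one place where you are too quick is the claim that bi-infinite geodesics sharing both ideal endpoints are "handled identically": truncating at both ends produces a geodesic \emph{quadrilateral}, not a triangle, so you must cut it along a diagonal into two triangles and apply the slim-triangle condition twice, which is exactly what degrades the constant from $\delta$ to $2\delta$ and explains why the proposition is stated with $2\delta$ in the first place. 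Spelling that out (side $\gamma|_{[-n,n]}$ lies in $N_{2\delta}$ of the union of the other three sides, and the two short sides recede from any fixed $x$) closes the only real gap.
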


The following tool from \cite{GH} is extremely useful for working in hyperbolic spaces. Recall that a geodesic metric space is called a {\it real tree} if the union of two geodesic segments with exactly one endpoint in common is again a geodesic segment.
\begin{Thm} (Tree Approximation) \label{tree}
Let X be a $\delta$-hyperbolic space and let F be a collection of n geodesic rays starting at $o$ and $k$ an integer such that $n \leq2^k$.  Then there is a real tree $T$ with base point $t$ and a map $\phi :F \to T$ preserving the distance to the base point such that

\begin{center} $ d(x,y) - 2(k+1) \delta \leq d(\phi(x),\phi(y)) \leq d(x,y)$
\end{center}
for all $x,y \in F$.
\end{Thm}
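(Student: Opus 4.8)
\emph{Approach.} The plan is to realize $T$ as the real tree canonically attached to a suitable $0$-hyperbolic metric on $F$, using the standard characterization that a metric space embeds isometrically into a real tree if and only if its Gromov product satisfies the ultrametric-type inequality $(x|y)_w\ge\min\{(x|z)_w,(z|y)_w\}$ for all $w$. After truncating each ray to a long finite segment (the general case is similar, passing to a direct limit), regard $F$ as the union of $n$ geodesic segments $[o,x_1],\dots,[o,x_n]$. Since in any real tree $d_T(\phi x,\phi y)=d_T(t,\phi x)+d_T(t,\phi y)-2(\phi x|\phi y)_t$ (Gromov product based at $t$), and our $\phi$ will satisfy $d_T(t,\phi x)=d(o,x)$, the whole theorem reduces to producing a \emph{radial} map $\phi$ whose internal Gromov products are sandwiched: $(x|y)_o\le(\phi x|\phi y)_t\le (x|y)_o+(k+1)\delta$. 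The left inequality is the easy half: it is equivalent to $d_T(\phi x,\phi y)\le d(x,y)$, and it will hold automatically.

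\emph{Construction.} Define on $F$ the modified product
\[
(x|y)' := \sup\Big\{\ \min_{1\le l\le p}(z_{l-1}|z_l)_o\ :\ x=z_0,z_1,\dots,z_p=y\ \text{a chain in}\ F\ \Big\},
\]
and set $d'(x,y):=d(o,x)+d(o,y)-2(x|y)'$. The trivial chain gives $(x|y)'\ge(x|y)_o$; the ``$\min$ over chains'' form makes $(\cdot|\cdot)'$ satisfy the ultrametric-type inequality automatically; and one checks $0\le(x|y)'\le\min\{d(o,x),d(o,y)\}$, so that $d'$ is a (pseudo)metric, whence $(F,d')$ maps isometrically to a real tree $T$. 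Let $\phi$ be this map and $t$ the image of $o$. Because $(o|z)_o=0$ for every $z$, every chain issuing from $o$ has $\min=0$, so $(o|x)'=0$ and $d'(o,x)=d(o,x)$: the map is radial. It remains to bound $(x|y)'$ from above, and here the hypotheses enter. The $\delta$-inequality $(a|c)_o\ge\min\{(a|b)_o,(b|c)_o\}-\delta$, applied along a chain by the usual binary-subdivision argument, yields $\min_l(z_{l-1}|z_l)_o\le (x|y)_o+\lceil\log_2 p\rceil\,\delta$; and a reduction shows that the supremum defining $(x|y)'$ is, up to a bounded error, unchanged if one restricts to chains visiting each of the $n$ segments at most once, i.e. with $p\le n-1<2^k$, so $\lceil\log_2 p\rceil\le k$. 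Absorbing the discrepancies between these combinatorial quantities and true Gromov products into the constant then gives $(x|y)'\le(x|y)_o+(k+1)\delta$, which is exactly what is needed.

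\emph{Main obstacle.} The crux is precisely that last reduction together with the bookkeeping of the additive constant: one must show that detours of a chain within a single segment, or repeated visits to a segment, never improve the quantity by more than a controlled amount, so that $p$ may be taken $\le n$ and the stated $2(k+1)\delta$ — rather than some larger multiple of $k\delta$ — suffices; this is where the sharp form of $\delta$-thinness, expressed through Gromov products, must be handled carefully. A more classical alternative (compare \cite{GH}) avoids the chain formalism via a direct induction on $k$: split the $n$ rays into two groups of size $\le 2^{k-1}$, apply the inductive hypothesis to obtain trees $T',T''$ with error $2k\delta$, and glue them along their common initial radial arc of length $\min\{(x_i|x_j)_o: i,j\ \text{in different groups}\}$, checking that each of the $k$ gluings degrades the Gromov products by at most $\delta$; the essential difficulty there is the identical one of controlling the per-level loss.
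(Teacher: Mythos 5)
The paper offers no proof of this statement: it is quoted directly from \cite{GH}, so there is nothing internal to compare your argument against. Your outline follows one of the two standard routes to tree approximation (the chain construction producing a $0$-hyperbolic quotient, rather than the induction on $k$ with gluing, which you mention as the alternative), and the skeleton is correct: concatenation of chains gives the ultrametric inequality for $(\cdot|\cdot)'$ for free, $d'$ is a pseudometric with $d'\le d$ and $d'(o,x)=d(o,x)$, a $0$-hyperbolic pseudometric space embeds isometrically in a real tree, and the entire content of the theorem is the upper bound $(x|y)'\le (x|y)_o+(k+1)\delta$.

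However, you stop at exactly the step you label the crux, so as written this is an outline rather than a proof. The missing reduction does go through, and more cleanly than you suggest: if $z_i$ and $z_j$ ($i<j$) lie on the same ray through $o$, then $(z_i|z_j)_o=\min\{d(o,z_i),d(o,z_j)\}$, while $\min_{i<l\le j}(z_{l-1}|z_l)_o\le\min\{(z_i|z_{i+1})_o,\,(z_{j-1}|z_j)_o\}\le\min\{d(o,z_i),d(o,z_j)\}$; hence replacing the subchain $z_i,\dots,z_j$ by the single step $z_i,z_j$ never decreases the chain minimum, with no additive loss at all. Iterating leaves at most two (necessarily consecutive) chain points on each ray, so $p\le 2n\le 2^{k+1}$, and the dyadic estimate gives $\lceil\log_2 p\rceil\le k+1$ exactly; no ``absorption of discrepancies'' is needed. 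The one genuine loose end you should address is the hyperbolicity convention: your argument uses $\delta$ as the constant in $(a|c)_o\ge\min\{(a|b)_o,(b|c)_o\}-\delta$, whereas the paper defines $\delta$-hyperbolicity by thin triangles; the two constants agree only up to a uniform factor, so what you prove is the statement with the Gromov-product constant in place of the paper's $\delta$. Since the paper only ever invokes this estimate up to such bounded multiplicative corrections (e.g.\ the factor $e^{4\delta}$ in its shadow estimates), this is harmless, but it should be stated.
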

\end{subsection}

As an example of how tree approximation functions, we will explicitly characterize the above concepts in the case of a tree. In this case, geodesics are unique, and as noted before the Gromov product $(x|y)$ is equal to the length of overlap between the unique geodesics $[o,x]$ and $[o,y]$. Let $T$ be a real tree, $H = \beta\inv_{\gamma}(-\infty,-s]$, and $\gamma(t)$ the geodesic representing $\gamma$. Then $$x \in H \Leftrightarrow \beta_{\gamma}(x) < -s  \Leftrightarrow \lim_{t} d(x,\gamma(t)) - t < -s.$$ In this case, we can succintly compute this limit:
\begin{eqnarray*}
-s &>& \lim_t d(x,\gamma(t)) - t\\ &=&\lim_t  d(o,x) + t - 2(x|\gamma(t)) - t\\ &=& \lim_t d(o,x) - 2(x|\gamma(t))\\ &=& d(o,x) - 2(x|\gamma).
\end{eqnarray*}
However, any such $x$ must have $d(o,x) \geq s$, and hence $H \subset \{ x \in T : (x|\gamma) > s\}$. Note that if $\eta \in Sh(H)$, then there exists $t$ such that $\eta(t) \in H$. Then by our previous remarks, we have $$(\eta|\gamma) \geq (\eta(t)|\gamma) \geq s.$$ Therefore, if $\eta \in Sh(H)$, $\rho(\eta,\gamma) = e^{-(\eta|\gamma)} \leq e^{-(\eta(t)|\gamma)} \leq  e^{-s}.$ In particular, the shadow of the horoball based at $\xi$ corresponding to $\log R$ is contained in $B(\xi,R)$. It follows from this result and tree approximation that in the general case, the shadow of the horoball based at $\xi$ corresponding to $\log_a R$ is contained in $B(\xi, C e^{4\delta}R)$, where $C$ is as in the definition of a visual metric.
\end{section}

\begin{section}{Proof of Main Theorem}

For each point $x \in \partial X$ let $\gamma_x$ denote a geodesic ray from $o$ to $x$. Note that while such a geodesic will in general not be unique, the choice of the geodesic will not affect our results by stability of geodesics (Prop \ref{stability}) since if we chose another such geodesic ray, our calculations would only change by at most $2\delta$. Fix a visual metric with visual constant $a$, and denote by $\log := \log_a$.

\begin{Thm}
Let $X$ be a proper geodesic $\delta$-hyperbolic space and $\{ H_j \}$ a countable collection of disjoint horoballs.  Then the set of boundary points badly approximable by $\{H_j \}$ is $\beta$-absolutely winning with respect to any visual metric for $\beta < \frac{1}{3}$.
\end{Thm}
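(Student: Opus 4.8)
The plan is to follow the strategy of McMullen's argument for the lattice case, adapted to the boundary of a general $\delta$-hyperbolic space, producing in fact a stronger set: the set of boundary points avoiding the shadows of a fixed small scaling $sH_j$ of each horoball. I will show this avoidance set is $\beta$-absolutely winning for every $\beta < 1/3$, and then observe that any point in it is badly approximable by $\{H_j\}$ with the constant $s$. The key geometric input is the computation carried out at the end of $\S 2$: the shadow of the horoball based at $\xi_j$ corresponding to $\log_a R_j$ is contained in $B(\xi_j, Ce^{4\delta} R_j)$, and more generally the shadow of a sub-horoball of $H_j$ is contained in a ball about $\xi_j$ whose visual radius is controlled by how deep that sub-horoball sits. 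This lets me convert "avoid a shadow" into "avoid a metric ball in $\partial X$ of a prescribed radius."

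First I would set up Alice's strategy. Given Bob's ball $B_i = B(x_i, \rho_i)$ in $\partial X$, Alice must remove a ball $A_i$ of radius $\leq \beta \rho_i$ so as to eventually knock out every shadow that threatens to shrink around a limit point of the game. The standard device is to enumerate the horoballs $H_1, H_2, \dots$ and, at stage $i$, handle $H_i$ (or rather its scaling): the point is that once $\rho_i$ is small, the shadow $Sh(sH_i)$, if it meets $B_i$ at all, is already so small — because a shadow of visual radius comparable to $\rho_i$ forces the relevant sub-horoball of $H_i$ to be very deep, hence to have small shadow — that Alice can cover all of $Sh(sH_i) \cap B_i$ by a single ball $A_i$ of radius $\leq \beta \rho_i$. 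Here I would make precise the claim that there is an absolute gap: either $Sh(sH_i) \cap B_i = \emptyset$, or $Sh(sH_i) \cap B_i$ is contained in a ball of radius $\leq \beta \rho_i$; this is where one uses disjointness of the $H_j$ (so their shadows, at comparable scales, are well-separated) together with the exponential relationship between horoball depth and shadow radius. Choosing $s$ small — depending only on $\delta$, $a$, $C$, and $\beta$ — guarantees this gap. If $Sh(sH_i) \cap B_i = \emptyset$ Alice plays any legal dummy move.

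Next I would verify that this strategy wins. Since $\beta < 1/3$, Bob always has a legal move, so the game runs forever and $\bigcap_i B_i$ is nonempty; let $x_\infty$ be any point of it. For each $j$, at stage $i = j$ Alice removed $A_j \supseteq Sh(sH_j) \cap B_j$, and since $x_\infty \in B_{j+1} \subset B_j - A_j$, we get $x_\infty \notin Sh(sH_j)$. By the shadow-to-ball estimate this yields $\rho(x_\infty, \xi_j) \geq s' R_j$ for an appropriate $s' > 0$ independent of $j$ (absorbing the constants $C$ and $e^{4\delta}$ into the choice of scaling), so $x_\infty$ is badly approximable by $\{H_j\}$. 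Hence the badly approximable set contains $\bigcap_i B_i$, and is $\beta$-absolutely winning for every $\beta < 1/3$.

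The main obstacle I anticipate is the geometric gap lemma: showing that for $s$ small enough, $Sh(sH_j)$ intersected with any ball $B(x, \rho)$ is either empty or contained in a ball of radius $\leq \beta \rho$. In $\r^n$ with parabolics this is a clean horoball-geometry computation (shrinking a horoball by a linear factor shrinks its shadow by that factor), but in a general $\delta$-hyperbolic space one only has the coarse comparison from tree approximation, so the argument must be done up to additive $O(\delta)$ errors and then the factor $Ce^{4\delta}$ from the visual metric must be beaten by the choice of $s$. Making the quantifiers on $s$, $\beta$, and $\rho$ come out in the right order — $s$ chosen after $\beta$ but uniformly in $j$ and in Bob's play — is the delicate bookkeeping; once that lemma is in hand the rest is the routine strategy-stealing argument above. (Note that this theorem is stated for all of $\partial X$; the passage to $\Lambda(\Gamma)$ uniformly perfect, via Lemma \ref{diffuse is u.p.} and Theorem \ref{diffuse}, is carried out separately and is what upgrades this to Theorem \ref{main}.)
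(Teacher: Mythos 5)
There is a genuine gap: your ``gap lemma'' is false, and with it the enumeration strategy collapses. You claim that for $s$ small enough, $Sh(sH_i)\cap B_i$ is either empty or contained in a ball of radius $\leq \beta\rho_i$. But $Sh(sH_j)$ is (up to the visual-metric constants) a ball of radius $\sim sR_j$ about $\xi_j$, and nothing ties $sR_j$ to $\rho_i$: if $H_j$ has large shadow radius and Bob has already shrunk his ball near $\xi_j$, then $B_i\subset Sh(sH_j)$ and no single ball of radius $\beta\rho_i$ can excise the intersection. Shrinking $s$ only changes which pairs $(j,i)$ exhibit the problem; it cannot eliminate it, since the stage at which $H_j$ becomes dangerous is dictated by the relation $sR_j\sim\rho_i$ (and by whether Bob actually steers toward $\xi_j$), neither of which is visible from a fixed enumeration chosen before the game starts. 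Your justification (``a shadow of visual radius comparable to $\rho_i$ forces the sub-horoball to be deep'') is circular: the sub-horoball $sH_j$ is fixed in advance and its shadow has whatever radius it has. Note also that even sorting the horoballs by shadow size does not obviously work, because distinct disjoint horoballs can have overlapping shadows, so more than one might threaten $B_i$ at a comparable scale, while Alice may remove only one ball per turn.

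The paper avoids all of this with a reactive strategy keyed to the geodesic rather than to the shadows: at stage $i$ Alice tests whether $\gamma_{x_i}(-\log\rho_i)$ lies in some horoball $H$ --- unique by disjointness of the horoballs \emph{in $X$}, which is exactly what guarantees a single excision suffices --- and if so removes $B(\xi_i,\beta\rho_i)$. The key lemma then shows that any horoball whose scaled copy $cH$ is penetrated by the limiting geodesic $\gamma_x$ must be detected at some stage: the geodesic spends a time interval of length at least $-\log\beta+\delta$ inside a slightly enlarged copy of $cH$ (by $\delta$-quasiconvexity), while consecutive test times $t_i$ are spaced by at most $-\log\beta$, so some $t_i$ lands in that interval; moreover the detection necessarily happens while $\rho_i\gtrsim cR$, which gives the uniform lower bound $\rho(x,\xi)\geq \beta c R$ needed for bad approximability. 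Your reduction of the theorem to avoiding scaled shadows, and your endgame converting avoidance into the badly approximable condition, match the paper; it is the strategy itself, and specifically the mechanism ensuring each horoball is handled at the unique scale where one removed ball suffices, that is missing.
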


\begin{proof}

First note that we can replace the collection $\{ H_j \}$ by any rescaling of the same collection, without affecting the result. In particular, we may assume that the basepoint $o$ is not contained in any horoball $H \in \{ H_j \}$. 

We then define a strategy as follows:  Let $B_i = B(x_i,\rho_i) \subset \partial X$ denote Bob's $i^{th}$ move.  If $\gamma_{x_i} (-\log\rho_i)$ is in one of the horoballs $H$, which is unique by disjointness and which we will therefore denote $H_i$, then set $A_i = B(\xi_i,\beta\rho_i)$, where $H_i$ is based at $\xi_i \in \partial X$. Otherwise, choose $A_i$ disjoint from $B_i$.  Let $t_i = -\log(\rho_i)$ and suppose that $\cap_i B_i = \{x\}$.

We will need the following lemma:

\begin{Lemma}

If there exists $t > t_1$ such that $\gamma_x(t) \in cH$ for some $H \in \{ H_j \},$ where $c$ is a constant depending only on $a, \beta$, and $\delta,$ then there exists an $i \geq 1$ such that $\gamma_{x_i}(t_i) \in H$.  Moreover if $i > 1$ then $c \cdot diam(Sh(H)) \leq \rho_i$.
\end{Lemma}

\begin{proof}

Set $c = \frac{\beta}{C \cdot 4\delta \cdot a}e^{-\delta}$ and consider $\gamma_x^{-1}(\frac{1}{a}e^{-\delta}H) \subset (b,d)$ where $b$ is the first time $\gamma_x$ enters $\frac{1}{a}e^{-\delta}H$ and $d$ is the last time $\gamma_x$ exits $\frac{1}{a}e^{-\delta}H$.  If $t_i \in (b,d)$ then $\gamma_x (t_i)$ is within distance $\delta$ of $\frac{1}{a}e^{-\delta}H$ since horoballs are $\delta$-quasiconvex, i.e. for any two points in $H$, the geodesic segment between them lies in the $\delta$ neighborhood of $H$.  Thus $\gamma_x (t_i) \in \frac{1}{a}H$. We will denote $H$ by $H_i$ and set $R_i$ to be the radius of the shadow as before.

Now if $T$ is a real tree then $\rho(\xi,\nu) = e^{-(\xi|\nu)}$ is a visual metric on $\partial T$.  Hence if $\rho(x,x_i) \leq \rho_i$ then $(x|x_i) \geq t_i$ and so $\gamma_x$ and $\gamma_{x_i}$ coincide for at least length $t_i$ and thus $d(\gamma_x (t_i),\gamma_{x_i} (t_i)) = 0$  in  $T$.  Therefore by tree approximation (Prop \ref{tree}), $d(\gamma_x (t_i),\gamma_{x_i} (t_i)) \leq 4\delta$ in $X$.

Let $H$ be determined by $\gamma$ and let $z_1,z_2 \in \gamma$ with $z_1 \in \partial H$ and $z_2 \in \partial(cH)$.  Then observe that

\begin{center}  $\log(\frac{1}{c}) = \log(r) - \log(cr) = b_\gamma (z_1) - b_\gamma (z_2) = \lim_{t\to\infty}(d(\gamma(t),z_1) - d(\gamma(t),z_2)) = d(z_1,z_2) = d(\partial H,\partial(cH))$.
\end{center}

Thus $|b-d| \geq d(\partial(\frac{1}{a}H_i),\partial(\frac{1}{a}\beta e^{-\delta} H_i)) = \log(\frac{1}{\beta e^{-\delta}}) = -\log(\beta) + \delta$. On the other hand, $|t_{i+1} - t_i| = -\log(\frac{\rho_{i+1}}{\rho_i})| \leq -\log(\beta)$.  Hence $t_i - b \leq -\log(\beta) + \delta$.  Again if we are in a real tree $T$ then $b \leq -\log(R_i) + \log(a)$ since $\gamma_x$ intersects $\frac{1}{a}H_i$, and hence $x \in Sh(\frac{1}{a}H_i) \subset Sh(H_i) \subset B(\xi_i,R_i)$. Thus in the tree we get $b \leq -\log R_i + \log a$, and therefore by Prop \ref{tree}, in the general case, $R_i e^{b} \leq C \cdot 4\delta \cdot a$. Combining these estimates, we have $$\frac{R_i}{C \cdot 4\delta \cdot a} \leq e^{-b} \leq \frac{\rho_i}{\beta}e^{\delta},$$ and therefore, $$\frac{\beta}{C \cdot 4 \delta \cdot a} e^{-\delta} R_i \leq \rho_i,$$ as claimed.

\end{proof}

Returning to the strategy, consider the collection $\{ cH \}$, with $c$ as in the previous lemma. If $\gamma_x$ is disjoint from $cH$, then $\rho(x,\xi) > c \cdot e^{-4\delta} \frac{R_i}{2 C}$. Otherwise, by the lemma, there exists an $i$ such that $\gamma_{x_i}(t_i) \in H$, in which case Alice's move is $A_i = B(\xi_i, \beta \rho_i)$. Hence by definition $x \notin B(\xi_i, \beta \rho_i)$, and therefore $\rho(x,\xi_i) > \frac{\beta \rho_i}{R_i} R_i$. 

So the proof reduces to the claim that $s = \inf_i \left\{\frac{c}{C \cdot 2} e^{-\delta}, \frac{\beta \rho_i}{R_i}\right\} > 0$. But by the previous lemma, this infimum is bounded from below by $\inf\left\{ \frac{ c}{C \cdot 2} e^{-4\delta}, \frac{\beta \rho_1}{R_1}, \frac{\beta^2}{C \cdot 4 \delta \cdot a} e^{-\delta}\right\} > 0$. 

\end{proof}

To finish the proof of Theorem \ref{main}, we need to play the game on $\lg$, where $\Gamma \subset \operatorname{Isom}(X)$. So we need to know that the absolute winningness of the set of boundary points badly approximable by $\{ H_j \}$ is inherited by limit points of $\Gamma$. However, since $\lg$ is assumed uniformly perfect, by Lemma \ref{diffuse is u.p.} it is diffuse, and thus the set of limit points badly approximable by $\{ H_j \}$ is absolutely winning in $\lg$ by Theorem \ref{diffuse}. 

\end{section}

\begin{section}{Applications to Kleinian Groups}

We denote by $\h^{n+1}$ the simply connected Riemannian manifold with constant negative sectional curvature equal to $-1$, which is unique up to isometry.  In this paper we will work with the ball model of hyperbolic space. This model is given by $\mathcal{B} = \{x \in \r^{n+1} : |x| < 1\}$ with the metric given by $ds =\frac{|dx|}{(1 - |x|^2)}$. The geodesics correspond to radial lines through the origin and arcs of circles which intersect the unit sphere orthogonally. A {\it horoball} is a Euclidean ball which is internally tangent to the unit sphere, which is the boundary in this model.

A {\it  Kleinian group} $\Gamma$ is a discrete subgroup of $\operatorname{Isom}(\h^{n+1})$.  Each isometry uniquely extends to a conformal homeomorphism of the boundary $\partial \h^{n+1}$ and conversely each conformal homeomorphism of $\partial \h^{n+1}$ uniquely extends to an isometry of $\h^{n+1}$. The isometries can be classified into three types which are elliptic (fixes one point in $\h^{n+1}$), parabolic (fixes exactly one point in $\partial \h^{n+1}$ and no points in $\h^{n+1}$), and hyperbolic (fixes exactly two points in $\partial \h^{n+1}$ and no points in $\h^{n+1}$).  

In this section we specialize Theorem \ref{main} to the case of a Kleinian group $\Gamma$ acting on $\h^{n+1}$. In this context the notion of approximation by horoballs takes on a particularly important interpretation: we call a limit point $p \in \lg$ {\it parabolic} if it is the fixed point of a parabolic isometry in $\Gamma$. We wish to approximate limit points by parabolic limit points. As mentioned in the introduction, this notion of appoximation has been extensively studied, particularly because of its generalization of the classical BA. If we take $\Gamma = PSL_2(\z)$ acting on $\h^2$, then the parabolic fixed points are precisely the rationals and the point at infinity, so that approximation by parabolics captures the traditional notion of approximation on $\r$. 

If $p \in \lg$ is parabolic, then so is $g(p) \in \lg$ for every $g \in \Gamma$. Moreover, the set of all parabolic fixed points decomposes into a countable union of such orbits, so that we may choose a set $P$ of mutually inequivalent parabolic points and characterize the set of parabolic points as $\Gamma(P) = \{ g(p) : g \in \Gamma, p \in P\}$. We can now phrase the main definition of this section:
\begin{Def} A limit point $x \in \lg \subset S^n$  is called {\it badly approximable by a parabolic} $p \in P$  if there exists a constant $k(x,p) > 0$ such that $$|x - g(p) | > k(x,p) (1 - |g(0)|) \text{ for all } g \in \Gamma.$$ We denote this set by $BA(\Gamma, p)$. \\
We define the set of {\it badly approximable} limit points, written $BA(\Gamma)$, to be the set of limit points which are simultaneously badly approximable by all parabolics, i.e. $$BA(\Gamma) = \bigcap_{p \in P}BA(\Gamma, p).$$
\end{Def}

We are interested in the case when $BA(\Gamma)$ is absolutely winning in $\lg$. Note that by Lemma \ref{intersection}, it suffices to show that $BA(\Gamma,p)$ is absolutely winning for each $p \in P$. To apply our main theorem, we need two conditions, namely that the limit set $\lg$ is uniformly perfect and that there exists a $\Gamma$-invariant collection of disjoint horoballs based at the parabolic fixed points of $\Gamma$. For Kleinian groups, the former condition is a well-known consequence of being finitely generated, see \cite{JV}.
\begin{Thm} \label{unifperfect}
For $\Gamma$ a non-elementary finitely generated Kleinian group, $\Lambda(\Gamma)$ is uniformly perfect in $S^n$.
\end{Thm}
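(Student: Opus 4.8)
The plan is to reduce uniform perfectness to one scale‑invariant estimate, verify that estimate near the fixed point of a loxodromic element by a direct dynamical computation, and then propagate it over all of $\Lambda(\Gamma)$ using minimality of the action; the finite generation hypothesis will be needed only, but essentially, in the propagation step. Since this is the theorem of \cite{JV}, I will only sketch the argument.

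First I would reformulate. Writing $\Lambda = \Lambda(\Gamma)$, one checks directly from the definition that $\Lambda$ is uniformly perfect if and only if there is a constant $c > 0$ with $\operatorname{diam}(\Lambda \cap B(x,r)) \ge c\,r$ for every $x \in \Lambda$ and every $0 < r < \operatorname{diam}\Lambda$. Moreover it will suffice to prove this for all sufficiently small $r$: since $\Gamma$ is non‑elementary, $\Lambda$ is a compact perfect set, so for any fixed $r_0 > 0$ the quantity $\inf_{x \in \Lambda}\operatorname{diam}(\Lambda \cap B(x,r_0))$ is positive — otherwise a subsequence of near‑minimizing centers would converge to an isolated point of $\Lambda$ — and this disposes of the range $r \in [r_0, \operatorname{diam}\Lambda)$ at the cost of a constant depending on $r_0$.

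Next I would establish the estimate near a loxodromic fixed point. A non‑elementary $\Gamma$ contains a loxodromic isometry $g$, with distinct fixed points $g^{+}, g^{-} \in \Lambda$ and multiplier $|\kappa| \in (0,1)$; fix also a third point $w \in \Lambda \setminus \{g^{+}, g^{-}\}$. The orbit $\{g^{m}w : m \ge 0\}$ lies in $\Lambda$ and converges to $g^{+}$, and since $g$ is conformal at $g^{+}$ with derivative of norm $|\kappa|$, the successive ratios $d(g^{m+1}w, g^{+})/d(g^{m}w, g^{+})$ tend to $|\kappa|$ and hence lie in a fixed compact subinterval of $(0,1)$ for all $m$ beyond some $M_0$. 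Consequently, for every $r$ below $r_1 := d(g^{M_0}w, g^{+})$ there is an $m \ge M_0$ with $d(g^{m}w, g^{+}) \asymp r$, and this yields the reformulated estimate at the point $g^{+}$, for all $r < r_1$, with a constant depending only on $|\kappa|$. The same holds at $g^{-}$, and at the fixed points of every loxodromic element of $\Gamma$.

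Finally comes the propagation step, which I expect to be the main obstacle. Non‑elementarity makes the $\Gamma$‑action on $\Lambda$ minimal, so $\overline{\Gamma g^{+}} = \Lambda$ and loxodromic fixed‑point pairs are dense in $\Lambda \times \Lambda$. To get the estimate at an arbitrary $x \in \Lambda$ at scale $r$, one would transport the scale‑$r$ picture of $\Lambda$ near $x$, by a suitable $h \in \Gamma$, to a picture at a definite scale near $g^{+}$ and apply the previous paragraph. The delicate point — and the sole place finite generation is used — is that this transport can be arranged with metric distortion bounded independently of $x$ and $r$; equivalently, that the normalized local pictures of $\Lambda$ form a precompact family. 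For geometrically finite $\Gamma$ this is transparent: the convex core $\operatorname{Hull}(\Lambda)/\Gamma$ is compact modulo standard cusp neighborhoods, and each maximal parabolic subgroup acts cocompactly on the corresponding punctured limit set, which supplies the required bounded‑geometry structure (conical points are handled by the compact part, bounded parabolic points by the cusps). The general finitely generated case is carried out in \cite{JV}. This distortion control cannot be dispensed with — there are infinitely generated non‑elementary Kleinian groups whose limit sets fail to be uniformly perfect — so it is genuinely where the hypothesis does its work. Granting it, the constants from the loxodromic step assemble into a single $c > 0$ valid at all small scales, and with the reduction above this shows that $\Lambda(\Gamma)$ is uniformly perfect.
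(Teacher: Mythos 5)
The paper does not actually prove this statement: Theorem~\ref{unifperfect} is quoted verbatim from Jarvi and Vuorinen \cite{JV}, with no argument supplied, so there is no internal proof to compare yours against. Judged on its own terms, your reduction to the estimate $\operatorname{diam}(\Lambda \cap B(x,r)) \geq c\,r$ is a correct (constants-only) reformulation of the annulus definition used in this paper, the compactness argument disposing of large $r$ is fine, and the dynamical computation at a loxodromic fixed point is sound (the ratios $d(g^{m+1}w,g^{+})/d(g^{m}w,g^{+})$ do converge to the norm of the derivative at $g^{+}$, which pins a point of $\Lambda$ in every annulus $B(g^{+},r)\setminus B(g^{+},ar)$ for small $r$). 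The one substantive step — transporting this estimate to an arbitrary $x\in\Lambda$ at an arbitrary small scale with uniformly bounded distortion — you correctly identify as the crux and as the only place finite generation enters, but you do not prove it; you defer it to \cite{JV}, exactly as the paper does for the entire theorem. So there is no gap relative to the paper, but be aware that as a standalone argument your write-up is incomplete precisely at the load-bearing step; your sketch of that step in the geometrically finite case (compact convex core modulo cusps, cocompact parabolic action at bounded parabolic points) is the standard and essentially complete route for the only case this paper ultimately uses, namely Corollary 4.4.
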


As for the latter condition, the existence of a disjoint $\Gamma$-invariant collection of horoballs, it is known to fail for some non-elementary Kleinian groups. In \cite{A}, Apanasov constructs a non-elementary infintely generated Kleinian group for which there is no such $\Gamma$-invariant collection. And while his proof crucially uses the infinite generation, it is our conjecture that there are finitely generated Kleinian groups for which this condition fails as well (for some discussion of this condition in the literature, see \cite{G}). Nonetheless, if such a collection exists, then we have 
\begin{Cor} Let $\Gamma$ be a non-elementary, finitely generated Kleinian group and $\{ H_j \}$ a $\Gamma$-invariant disjoint collection of horoballs based at the parabolic fixed points of $\Gamma$. Then $BA(\Gamma)$ is absolutely winning in $\lg$. 
\end{Cor}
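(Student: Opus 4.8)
The plan is to reduce the Corollary to Theorem~\ref{main}, applied separately to each $\Gamma$-orbit of horoballs, and then to reassemble the pieces using the countable intersection property of absolutely winning sets (Lemma~\ref{intersection}). Since $BA(\Gamma)=\bigcap_{p\in P}BA(\Gamma,p)$ and the set $P$ of inequivalent parabolic points is countable, Lemma~\ref{intersection} reduces us to proving that $BA(\Gamma,p)$ is absolutely winning in $\lg$ for each fixed $p\in P$. The hypotheses of Theorem~\ref{main} are in force: $\h^{n+1}$ is a proper, geodesic, $\delta$-hyperbolic space (it is $\mathrm{CAT}(-1)$); $\Gamma$ is a non-elementary discrete group of isometries; $\lg$ is uniformly perfect by Theorem~\ref{unifperfect}; and writing $H_p$ for the horoball of the given collection based at $p$ and $\Gamma_p$ for the stabilizer of $p$, the orbit $\{gH_p : g\Gamma_p\in\Gamma/\Gamma_p\}$ is a sub-collection of the disjoint collection $\{H_j\}$, hence itself disjoint.

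Fix $p\in P$. Applying Theorem~\ref{main} to the disjoint collection $\{gH_p\}$, using the visual metric on $S^n=\partial\h^{n+1}$ given by (a constant multiple of) the ambient Euclidean metric, we get that the set $E_p$ of limit points badly approximable by $\{gH_p\}$ is absolutely winning in $\lg$. I would then show $E_p\subseteq BA(\Gamma,p)$, via three comparisons, all with constants depending on $\Gamma$ and $H_p$ but not on $g$: first, by the shadow estimate recorded at the end of $\S 2$, the shadow radius $R_{gH_p}$ is comparable to the Euclidean diameter $\mathrm{diam}(gH_p)$; second, in the ball model $\mathrm{diam}(gH_p)\asymp e^{-d(0,\,gH_p)}$; third, for every $g\in\Gamma$ and every $\gamma\in\Gamma_p$,
$$1-|g\gamma(0)|\ \asymp\ e^{-d(0,\,g\gamma(0))}\ =\ e^{-d((g\gamma)^{-1}0,\,0)}\ \le\ e^{\,d(0,H_p)}\,e^{-d((g\gamma)^{-1}0,\,H_p)}\ =\ e^{\,d(0,H_p)}\,e^{-d(0,\,gH_p)},$$
where the last equality uses that $\gamma^{-1}$ preserves $H_p$, and the inequality is $d((g\gamma)^{-1}0,0)\ge d((g\gamma)^{-1}0,H_p)-d(0,H_p)$. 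Hence $1-|g\gamma(0)|\lesssim_{p}\mathrm{diam}(gH_p)\asymp R_{gH_p}$. If $x\in E_p$, so $\rho(x,g(p))>s\,R_{gH_p}$ for all cosets $g\Gamma_p$ and some $s>0$, then, since $g(p)=g\gamma(p)$ depends only on the coset while the inequality defining $BA(\Gamma,p)$ ranges over all $g\in\Gamma$, we get $|x-g(p)|\gtrsim\rho(x,g(p))>s\,R_{gH_p}\gtrsim_{p}1-|g(0)|$ for every $g\in\Gamma$, with a uniform constant; thus $x\in BA(\Gamma,p)$.

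A superset of an absolutely winning set is absolutely winning, since Alice's strategy witnessing $\bigcap_i B_i\cap E_p\ne\emptyset$ also witnesses $\bigcap_i B_i\cap BA(\Gamma,p)\ne\emptyset$. So $BA(\Gamma,p)$ is absolutely winning in $\lg$ for each $p\in P$, and the Corollary follows from Lemma~\ref{intersection}.

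I expect the only real obstacle to be the geometric bookkeeping in the middle paragraph: one must relate the Euclidean quantity $1-|g(0)|$, which is sensitive to the choice of coset representative, to the coset-invariant horoball diameter, and invoke the $\S 2$ shadow estimate in the concrete ball model. The winning-theoretic steps --- restricting Theorem~\ref{main} to a sub-collection, monotonicity of absolute winning under supersets, and the countable intersection lemma --- are then formal.
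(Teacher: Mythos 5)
Your proposal is correct and follows essentially the same route as the paper: apply Theorem~\ref{main} to the orbit $\{gH_p\}$ for each $p\in P$, show the resulting absolutely winning set is contained in $BA(\Gamma,p)$ via a comparison between the shadow/Euclidean radius of $gH_p$ and $1-|g(0)|$, and then intersect over the countable set $P$ using Lemma~\ref{intersection}. The only difference is cosmetic: where you derive the estimate $1-|g(0)|\lesssim_p R_{gH_p}$ by a direct ball-model computation, the paper simply cites the discussion in Nicholls for the bound $r_{g(p)}>C(1-|g(0)|)$.
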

\begin{proof} By Theorem \ref{main}, we know that the set $$\{x \in S^n : \exists k(x,p) > 0 \text{ such that } \forall g\in \Gamma, |x - g(p)| > k(x,p) r_{g(p)} \}$$ is absolutely winning in $\lg$. Here $r_{g(p)}$ is the Euclidean radius of the horoball based at $g(p)$, which is comparable to its radius in the visual metric on $S^n$. By the discussion in \cite{N}, there exists a constant $C$ depending only on $\Gamma$ and $p$ such that $r_{g(p)} > C (1 - |g(0)|)$. Therefore, the above set is a subset of $BA(\Gamma, p)$, which is therefore also absolutely winning in $\lg$. As mentioned before, since $P$ is a countable set, by Lemma \ref{intersection}, $BA(\Gamma)$ is absolutely winning in $\lg$.
\end{proof}

Despite our earlier discussion, there is a well-known condition on $\Gamma$ which guarantees the existence of such a collection. We say that $\Gamma$ is {\it geometrically finite} if it is finitely generated and its limit set is a disjoint union of conical and parabolic limit points. Recall that a limit point $x \in \lg$ is {\it conical} if there exists a sequence $\gamma_i \in \Gamma$ such that $\gamma_i(o)$ converges to $x$ in a cone with vertex at $x$.  This is equivalent to the existence of a constant $k=k(x)$ such that $|x - \gamma_i(o)| < k(1 - |\gamma_i(o)|)$.  The discussion in \cite{Bow} implies that for such groups, we can choose horoballs contained in the cuspidal ends of the quotient manifold $\h^{n+1}/\Gamma$ which satisfy the hypotheses of the corollary. Therefore, we immediately obtain
\begin{Cor} Let $\Gamma$ be a non-elementary geometrically finite Kleinian group. Then $BA(\Gamma)$ is absolutely winning in $\lg$. 
\end{Cor}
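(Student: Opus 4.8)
The plan is to deduce this corollary from the preceding one, so the work is to verify its two hypotheses for a geometrically finite $\Gamma$: that $\lg$ is uniformly perfect, and that a $\Gamma$-invariant disjoint collection of horoballs based at the parabolic fixed points of $\Gamma$ exists. The first is immediate: by definition a geometrically finite group is finitely generated, so Theorem \ref{unifperfect} gives that $\lg$ is uniformly perfect in $S^n$.

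For the second hypothesis I would invoke the cusp structure of geometrically finite manifolds. Since $\Gamma$ is geometrically finite, its parabolic fixed points fall into only finitely many $\Gamma$-orbits; let $P = \{p_1,\dots,p_m\}$ be orbit representatives with $\Gamma_i = \operatorname{Stab}_\Gamma(p_i)$. By the Margulis lemma (equivalently, the thick--thin decomposition of $\h^{n+1}/\Gamma$), for each $i$ one can choose a horoball $H_{p_i}$ based at $p_i$ whose $\Gamma$-stabilizer is exactly $\Gamma_i$ and which is \emph{precisely invariant}, i.e. $gH_{p_i}=H_{p_i}$ for $g\in\Gamma_i$ and $gH_{p_i}\cap H_{p_i}=\emptyset$ for $g\notin\Gamma_i$. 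Shrinking the finitely many $H_{p_i}$ uniformly if necessary, one arranges in addition that translates of distinct $H_{p_i}$ are pairwise disjoint — this is precisely the statement that the cuspidal ends of $\h^{n+1}/\Gamma$ can be realized by disjoint embedded horoball neighborhoods, which is the content of the discussion in \cite{Bow}. Then $\{H_j\} := \{\,gH_{p_i} : 1\le i\le m,\ g\Gamma_i\in\Gamma/\Gamma_i\,\}$ is a well-defined (each $gH_{p_i}$ depends only on the coset $g\Gamma_i$), $\Gamma$-invariant, disjoint collection of horoballs based exactly at the parabolic fixed points of $\Gamma$.

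With both hypotheses established, the previous Corollary applies to this $\{H_j\}$ and yields at once that $BA(\Gamma)$ is absolutely winning in $\lg$.

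The main obstacle is the disjointness of the full collection $\{H_j\}$: one must arrange simultaneously that the $\Gamma$-translates of each single $H_{p_i}$ are precisely invariant and that $\Gamma$-translates of distinct $H_{p_i}$ never meet. This is exactly where geometric finiteness is indispensable — finitely many cusps, each thin part an embedded horoball quotient — and it rests on properness of the $\Gamma$-action together with finiteness of the number of cusps to uniformly shrink the horoballs; without these hypotheses such a collection can fail to exist, as the example of Apanasov \cite{A} shows.
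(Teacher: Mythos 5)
Your proposal is correct and follows essentially the same route as the paper: both verify uniform perfectness via Theorem \ref{unifperfect} (geometric finiteness implying finite generation) and obtain the disjoint $\Gamma$-invariant horoball collection from the cusp structure described in \cite{Bow}, then invoke the preceding corollary. You simply spell out the precisely-invariant-horoball construction that the paper leaves to the citation.
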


As mentioned in the introduction, this corollary can be viewed as a generalization of a result of McMullen which asserts this result holds for lattices. A Kleinian group $\Gamma$ is called a {\it lattice} if the quotient $\h^{n+1}/\Gamma$ carries a finite $\operatorname{Isom}(\h^{n+1})$-invariant measure. This condition is equivalent to $\Gamma$ being geometrically finite and $\lg = S^n$. 

It is remarked in \cite{S} that for a geometrically finite Kleinian group, a limit point $x$ is badly approximable if and only if any geodesic ray terminating at $x$ has bounded projection in the quotient manifold. The above corollary can therefore be restated as follows: for a non-elementary geometrically finite Kleinian group, the set of bounded geodesic rays is absolutely winning. In the next section we will show that winning subsets of $\lg$ have dimension equal to the critical exponent of $\Gamma$, thereby recapturing the result of Bishop and Jones \cite{BJ} in the geometrically finite case. 

As another consequence of winning, we also show:
\begin{Cor}  The set of bounded geodesics in $\h^{n+1}/\Gamma$ is winning in $\Lambda(\Gamma) \times \Lambda(\Gamma)$ for $\Gamma$ geometrically finite. 
\end{Cor}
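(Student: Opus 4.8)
The plan is to translate ``bounded geodesic'' into a condition on the pair of endpoints, to recognise the resulting subset of $\lg\times\lg$ as the intersection of two winning sets, and then to invoke the stability of the class of winning sets under products and finite intersections. First I would set up the geometric dictionary. By the remark of Stratmann \cite{S} recalled above, a limit point $x\in\lg$ lies in $BA(\Gamma)$ if and only if some --- equivalently, any --- geodesic ray terminating at $x$ has bounded projection to $\h^{n+1}/\Gamma$; here one uses that two geodesic rays with a common endpoint are asymptotic, hence project to subsets at bounded Hausdorff distance, so that boundedness of the projection does not depend on which ray is chosen. Since a bi-infinite geodesic $\gamma$ projects to a bounded subset of $\h^{n+1}/\Gamma$ precisely when both of its ends do, and its ends are rays terminating at $\gamma(+\infty)$ and $\gamma(-\infty)$, identifying $\gamma$ with the ordered pair of its (necessarily distinct) endpoints yields that the set of bounded geodesics is
$$\mathcal{G}=\bigl(BA(\Gamma)\times BA(\Gamma)\bigr)\setminus\Delta,\qquad \Delta:=\{(\zeta,\zeta):\zeta\in\lg\}\subset\lg\times\lg .$$

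Next I would play Schmidt's game on the closed set $\lg\times\lg\subset\partial\h^{n+1}\times\partial\h^{n+1}$, using the maximum product metric; the point of this choice is that a ball of the product is then a product of balls of the factors, so the game on $\lg\times\lg$ projects coordinatewise to the game on $\lg$ with the same parameters, which is the standard reason a product of $\alpha$-winning sets is $\alpha$-winning. For $\Gamma$ geometrically finite, $\lg$ is uniformly perfect (Theorem \ref{unifperfect}) and a $\Gamma$-invariant disjoint collection of horoballs based at the parabolic points exists, so $BA(\Gamma)$ is absolutely winning in $\lg$ by the preceding corollary, hence $\alpha$-winning in $\lg$ for all sufficiently small $\alpha$ by Lemma \ref{abswin}; therefore $BA(\Gamma)\times BA(\Gamma)$ is $\alpha$-winning in $\lg\times\lg$ for all small $\alpha$. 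It then remains to prove that $(\lg\times\lg)\setminus\Delta$ is $\alpha$-winning for all small $\alpha$, since $\mathcal{G}$, as the intersection of two $\alpha$-winning sets, will then be $\alpha$-winning.

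For the off-diagonal set I would give Alice an explicit strategy drawing on the uniform perfectness of $\lg$, with constant $\nu$. Whenever Bob plays a ball $B\bigl((\xi,\eta),\rho\bigr)$ that still meets $\Delta$ --- so that $d(\xi,\eta)\le 2\rho$ --- Alice applies uniform perfectness to the first coordinate $\xi$ at a scale $R\le(1-\alpha)\rho$ chosen in terms of $d(\xi,\eta)$ (comparable to $\rho$ when $d(\xi,\eta)$ is much smaller than $\rho$, and at most $\tfrac12 d(\xi,\eta)$ otherwise), obtaining a point $\xi'\in\lg$ with $d(\xi,\xi')\le(1-\alpha)\rho$ but $d(\xi',\eta)\ge K\rho$, where $K>0$ depends only on $\nu$ (a value $K\approx\nu^2/4$ works). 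She plays the ball $B\bigl((\xi',\eta),\alpha\rho\bigr)$: it is contained in Bob's ball, and its center is at distance at least $\tfrac12 K\rho$ from $\Delta$, so the ball is disjoint from $\Delta$ as soon as $\alpha<K/2$. From that move on, every ball --- and hence the limit point --- lies off $\Delta$, so $(\lg\times\lg)\setminus\Delta$ is $\alpha$-winning for every $\alpha<K/2$.

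The one step requiring genuine care is the last one: extracting from the single-scale statement of uniform perfectness a perturbed center that is at once inside Bob's ball and a definite distance from the diagonal, uniformly over how close Bob's center may sit to $\Delta$ --- this is exactly why the scale $R$ must be allowed to depend on $d(\xi,\eta)$. The remaining ingredients --- the geometric dictionary, and the product and finite-intersection stability of winning sets --- are standard. I would also remark that, since $\lg$, and therefore $\lg\times\lg$, is diffuse, the same argument runs in McMullen's absolute game for each piece separately; it is only the product statement that is not available there, which is why the conclusion is phrased for Schmidt's original game.
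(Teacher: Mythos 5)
Your proposal is correct and follows essentially the same route as the paper: the identification of the bounded geodesics with $BA(\Gamma)\times BA(\Gamma)\setminus\Delta$, the coordinatewise product argument in the max metric, and the use of uniform perfectness of $\lg$ to perturb one coordinate of the center away from the diagonal. The only (immaterial) difference is packaging: you establish $(\lg\times\lg)\setminus\Delta$ as a separate $\alpha$-winning set and intersect, whereas the paper folds the diagonal-avoidance into Alice's first move of the combined strategy.
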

\begin{proof} Indeed, $BA(\Gamma)$ consists of the set of limit points whose geodesic rays in $\h^{n+1}$ have bounded projection. Fixing a basepoint $o$ in $\h^{n+1}$ yields a canonical identification between $T_o \h^{n+1} \cong \partial \h^{n+1}$, given by $u \in T_o \h^{n+1} \mapsto u^+$, the endpoint of the unique geodesic ray generated by $u$. Since any two geodesic rays with the same endpoint are forward asymptotic, and hence lie within bounded distance of one another, they are both simultaneously bounded or unbounded in the projection to $\h^{n+1}/\Gamma$. Therefore, we have that $$\pi(u[0,\infty)) \mbox{ is bounded} \Leftrightarrow u^+ \in BA(\Gamma).$$ Clearly the projection of a bi-infinite geodesic $u$ is bounded if and only if the projections of its rays $u^+,u^-$ are, so the set of bounded geodesics may be identified with $$BA(\Gamma) \times BA(\Gamma) - \Delta,$$ where $\Delta$ is the diagonal.

Since $BA(\Gamma)$ is absolutely winning, by Lemma \ref{abswin} we have that $BA(\Gamma)$ is winning. Moreover, the product of two winning sets is winning with respect to the max metric on the product -- consider the projections, and employ the respective strategies and pull back the balls to obtain the next move -- so $BA(\Gamma) \times BA(\Gamma)$ is winning as a subset of $\Lambda(\Gamma) \times \Lambda(\Gamma)$. So it remains to show that we can exclude the diagonal and the set is still winning. 

It suffices for Alice to use her first move to ensure $A_1 \cap \Delta = \emptyset$. So let $B_1 = B((x,y),\rho) \subset \Lambda(\Gamma) \times \Lambda(\Gamma)$, where we may assume $\rho$ is sufficiently small. By Lemma 3.3, for $\beta$ sufficiently small, there exists $z \in \Lambda(\Gamma)$ such that $$B(z,\beta \rho) \subset B(y,\rho) - B(x,\beta \rho).$$ Set $A_1 = B((x,z),\beta \rho)$ and let $d(\cdot, \cdot)$ denote the spherical metric on $S^n$. We claim that $A_1 \cap \Delta = \emptyset$. If $(x',y') \in A_1$, then $$d(x, x') \leq \beta \rho$$ but $$d(x,y') \geq d(x,z) - d(y',z) > 2 \beta \rho - \beta \rho = \beta \rho$$ because $y' \in B(z,\beta \rho)$ and $d(z,x) > 2\beta \rho$. Hence $x' \ne y'$ as claimed.
\end{proof}

\end{section}

\begin{section}{Hausdorff Dimension}
We now turn to the question of computing the Hausdorff dimension of the set of badly approximable limit points of a Kleinian group as a consequence of winning. As mentioned in the introduction, it has been shown in \cite{Sch} that winning subsets of $\r^n$ have dimension $n$, and this result was generalized in \cite{F} to the support of any measure satisfying a power law. 
\begin{Lemma}\label{geom dim}(c.f. \cite{BFKRW}, Lemma $4.1$):\\
Let $K$ be the support of a measure $\mu$ satisfying a power law with exponent $\delta$, i.e. there exists a constant $C > 1$ such that for any ball $B(x,\rho)$ cenetered on $K$ with $\rho $ sufficiently small, $$C\inv \rho^{-\delta} \leq \mu(B(x,\rho)) \leq C \rho^{-\delta}.$$ Then $\text{dim}(S \cap K) \geq \delta$, whenever $S$ is winning on $K$. 
\end{Lemma}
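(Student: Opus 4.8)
The plan is to realize, inside $S\cap K$, a Cantor set whose Hausdorff dimension is as close to $\delta$ as we like, by playing Schmidt's game simultaneously along many nested sequences of balls and estimating the dimension of the resulting limit set via the mass distribution principle; this is the scheme of Schmidt \cite{Sch} and Fishman \cite{F}, adapted to the metric space $K$. Throughout, fix $\alpha\in(0,1)$ for which $S$ is $\alpha$-winning on $K$, and fix a small $\beta\in(0,\tfrac12)$, so that Alice has a winning strategy $\mathcal F$ for the $(\alpha,\beta)$-game on $K$ (all balls centered on $K$). We may assume $\delta>0$, the case $\delta=0$ being trivial.

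I would first extract two facts from the power law. (i) The power law forces $\mu$ to be doubling, hence $K$ to be a doubling metric space: for each $\theta\ge 1$ there is $M_\theta<\infty$ bounding the number of $\rho$-separated points contained in any ball of $K$ of radius $\theta\rho$. (ii) (Packing.) There is $c_0=c_0(C,\delta)>0$ such that every ball $B(x',\rho')$ centered on $K$, with $\rho'$ small, contains at least $N:=\lceil c_0\beta^{-\delta}\rceil$ points of $K$ which lie in $B(x',(1-\beta)\rho')$ and are pairwise $3\alpha\beta\rho'$-separated; indeed, take a maximal $3\alpha\beta\rho'$-separated subset of $K\cap B(x',(1-\beta)\rho')$, observe that the balls of radius $3\alpha\beta\rho'$ about these points cover $K\cap B(x',(1-\beta)\rho')$ by maximality, and compare $\mu$-masses via the power law (the factors of $\alpha$ and $\rho'$ cancel, leaving a constant depending only on $C,\delta$).

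Next I would build a tree of legal game moves. Starting from an arbitrary Bob ball $B(x_0,\rho_0)$ centered on $K$ with $\rho_0$ small, iterate the following super-round: at a node holding a Bob ball $B(x,\rho)$, let Alice reply by $\mathcal F$ to obtain $B(x',\alpha\rho)$, apply (ii) inside $B(x',\alpha\rho)$ to get points $y_1,\dots,y_N\in K$, and declare $B(y_1,\alpha\beta\rho),\dots,B(y_N,\alpha\beta\rho)$ to be the children of the node. Each $B(y_i,\alpha\beta\rho)$ is a legal Bob move (its radius is $\beta\cdot(\alpha\rho)$ and $d(x',y_i)\le(1-\beta)\alpha\rho$), and at each level the children of a node are separated by three times their common radius. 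Every infinite branch is a full play of the $(\alpha,\beta)$-game against $\mathcal F$, so its unique intersection point lies in $S$; since all centers lie in the closed set $K$, it also lies in $K$. Let $\mathcal C\subset S\cap K$ be the set of these points --- a Cantor set carrying exactly $N^{k}$ balls of radius $r_k:=(\alpha\beta)^k\rho_0$ at level $k$.

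Finally I would bound $\dim\mathcal C$ below. Put the Bernoulli measure $\nu$ on $\mathcal C$ giving each level-$k$ cylinder mass $N^{-k}$. Since at every level the children of a node are separated by three times their radius, an induction on the depth of the deepest common ancestor shows distinct level-$k$ balls have $r_k$-separated centers, so for $r_k\le r<r_{k-1}$ the doubling property of $K$ bounds the number of level-$k$ balls meeting $B(z,r)$ by a constant $M$ (depending on $C,\delta,\alpha,\beta$, but not on $k$); hence
\[
\nu(B(z,r))\ \le\ M\,N^{-k}\ =\ M\,(\alpha\beta)^{ks}\ \le\ M\,(r/\rho_0)^{s},\qquad s:=\frac{\log N}{\log(1/(\alpha\beta))},
\]
using that $(\alpha\beta)^{s}=N^{-1}$. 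By the mass distribution principle $\dim\mathcal C\ge s$, and
\[
s\ \ge\ \frac{\log c_0+\delta\log(1/\beta)}{\log(1/\alpha)+\log(1/\beta)}\ \xrightarrow{\ \beta\to 0\ }\ \delta ,
\]
so, as $S$ is $\alpha$-winning for every $\beta$, letting $\beta\to 0$ yields $\dim(S\cap K)\ge\delta$. The step I expect to be the real work is this last one: arranging the recursion so that at every level the children are separated by a fixed multiple of their radius --- which is exactly what makes the doubling bound $M$ independent of $k$ --- while keeping every move of Alice and Bob legal for the game on $K$ throughout; the packing count and the closing limit are then routine.
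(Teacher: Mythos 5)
The paper gives no proof of this lemma --- it is quoted from \cite{BFKRW} (Lemma 4.1), where the argument is precisely the Cantor-set-plus-mass-distribution scheme you carry out, so your proposal matches the intended proof. Your argument is correct; the only blemish is a constant slip --- applying your packing statement inside $B(x',\alpha\rho)$ yields siblings whose centers are $3\alpha$, not $3$, times their common radius apart (you want separation $3\beta\rho'$ in the packing step, not $3\alpha\beta\rho'$) --- which affects nothing, since any fixed multiple of the radius suffices both for disjointness of the cylinders and for the $k$-independent doubling bound $M$.
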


Unfortunately, the global measure formula in \cite{SV} shows that in general the Patterson-Sullivan measure of a geometrically finite Kleinian group fails to satisfy such a power law. The obvious exception is that if $\Lambda(\Gamma) = S^n$, that is to say $\Gamma$ is of the first kind, then the Patterson-Sullivan measure is equivalent to Lebesgue measure, and therefore satisfies such a power law with $\delta = n$. Since the Hausdorff dimension of $S^n$ is $n$, we see that any winning subset in this case has full dimension, as noted previously. Additionally, in the case when $\Gamma$ has no parabolic fixed points, i.e. the convex cocompact case, it was shown in \cite{US} that the Patt
erson-Sullivan measure also satisfies a power law. 

However, Fishman, Simmons, and Urbanski \cite{FSU} have furnished the following lemma by adapting the construction used in Bishop and Jones \cite{BJ}. 
\begin{Lemma} Let $\Gamma$ be a Kleinian group, and let $\delta$ be its critical exponent. For every $\epsilon > 0$, there exists a uniformly perfect subset $K_\epsilon$ of the conical limit set which is the support of a Borel measure $\mu_\epsilon$ satisfying a power law with exponent $\delta - \epsilon$. 
\end{Lemma}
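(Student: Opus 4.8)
The plan is to carry out, inside the conical limit set of $\Gamma$, a Moran-type Cantor construction in the spirit of Bishop and Jones \cite{BJ}. Assume $0<\epsilon<\delta$ (the statement is vacuous otherwise). The sole group-theoretic input is that, $\delta$ being the critical exponent, the Poincar\'e series diverges below it: $\sum_{\gamma\in\Gamma}e^{-(\delta-\epsilon)d(o,\gamma o)}=\infty$. The sole dynamical input is that $\Gamma$ acts properly discontinuously. Fix a large $r_0$ and write $b(\gamma)=Sh_o(B(\gamma o,r_0))$ for the standard shadows; these are (uniformly roundish) balls on $S^n$ of radius comparable to $e^{-d(o,\gamma o)}$. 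I would use repeatedly: (i) $\Gamma$-equivariance, $\gamma\, Sh_o(B(\eta o,r_0))=Sh_{\gamma o}(B(\gamma\eta o,r_0))$; (ii) the shadow lemma, comparing $Sh_o$ and $Sh_{\gamma o}$ of a subset of $b(\gamma)$ up to bounded multiplicative distortion; and (iii) bounded multiplicity of shadows at a fixed scale --- by proper discontinuity, every point of $S^n$ lies in at most $N_0=N_0(\Gamma)$ of the shadows $b(\gamma)$ with $d(o,\gamma o)\in[j,j+1)$.

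The combinatorial core, which I would establish first, is a uniform branching lemma: there are a scale $L=L(\Gamma,\epsilon)$ and a constant $c_L>0$ so that inside every standard shadow, writing $r$ for its radius, one can find finitely many pairwise disjoint closed balls $S_1,\dots,S_m$, contained in its half-concentric shrink and pairwise separated by $\ge c_L r$, with each $S_i$ containing the shadow of some element of $\Gamma$, with $r_{S_i}\le\tfrac12 r$ for all $i$ and $r_{S_i}\ge c_L r$ for all but at most one $i$, and with $\sum_i r_{S_i}^{\delta-\epsilon}=r^{\delta-\epsilon}$. The mechanism: transplant the orbit configuration $\{b(\eta):d(o,\eta o)\le L\}$ into the given shadow by (i)--(ii); a maximal disjoint-shadow subfamily then captures, by (iii), at least an $N_0^{-1}$-fraction of the $(\delta-\epsilon)$-power total, which is $\asymp r^{\delta-\epsilon}\sum_{d(o,\eta o)\le L}e^{-(\delta-\epsilon)d(o,\eta o)}$; since this last sum diverges as $L\to\infty$, fixing $L$ large makes the captured total exceed $r^{\delta-\epsilon}$ (while remaining finite). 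Keeping these balls in decreasing order of radius until the running $(\delta-\epsilon)$-power sum first passes $r^{\delta-\epsilon}$, then shrinking the last ball concentrically --- it still contains the shadow of a deep enough orbit element --- achieves the exact identity; the possibly tiny last ball will be harmless.

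Next I would iterate: take a root ball (a single shadow), and let each node's children be produced by the branching lemma, all balls closed. Put $K_\epsilon=\bigcap_k\bigcup_{\text{level-}k\text{ nodes}}$, and define $\mu_\epsilon$ by $\mu_\epsilon(S')=\mu_\epsilon(S)\, r_{S'}^{\delta-\epsilon}/r_{S}^{\delta-\epsilon}$ for $S'$ a child of $S$; this is consistent precisely because the child powers sum to the parent power, and after normalization $\mu_\epsilon(S)=r_S^{\delta-\epsilon}$ for every node. Enlarging $r_0$ so that each node actually contains the shadow $b(\gamma)$ of an orbit element with $e^{-d(o,\gamma o)}\asymp r_S\to 0$, every $\xi\in K_\epsilon$ has the ray $[o,\xi)$ passing within $r_0$ of orbit points tending to infinity, hence is a conical limit point; so $K_\epsilon$ lies in the conical limit set. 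Finally, since along each branch radii drop by a factor in $[c_L,\tfrac12]$ per step (with the at-most-one exceptional smaller child doing no harm) and the children are separated by a definite fraction of their radii, a standard Moran-set argument upgrades $\mu_\epsilon(S)=r_S^{\delta-\epsilon}$ to the two-sided power law $C^{-1}\rho^{\delta-\epsilon}\le\mu_\epsilon(B(x,\rho))\le C\rho^{\delta-\epsilon}$ for $x\in K_\epsilon$ and $\rho$ small: at the stopping node $S$ on $x$'s branch with $r_S\ge\rho>r_{S'}$ ($S'$ its child through $x$), the lower bound comes from $S'\subset B(x,2\rho)$ with $r_{S'}\ge c_L\rho$, and the upper bound from $B(x,\rho)$ meeting $O(1)$ nodes of $S$'s generation together with $r_S\le\rho/c_L$. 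The same bounded geometry gives uniform perfectness of $K_\epsilon$.

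I expect the main obstacle to be the geometric bookkeeping in the branching lemma: transplanting a ``large enough'' disjoint orbit configuration into an arbitrary shadow $b(\gamma)$ with simultaneous control of radii, disjointness, nesting, separation, and the $(\delta-\epsilon)$-power sum forces one to interleave $\Gamma$-equivariance with the shadow lemma carefully, since $\gamma$ moves the basepoint and thereby distorts shadows; one must also carry along, next to the Euclidean ball tree, the auxiliary orbit points witnessing conicality, and handle the last-child adjustment needed to hit the power identity \emph{exactly}. Everything else --- consistency of $\mu_\epsilon$, the Moran-type regularity estimate, and uniform perfectness --- is soft once the branching lemma is in hand, and the only arithmetic of the group used is the divergence of the Poincar\'e series below $\delta$.
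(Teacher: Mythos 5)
The paper does not actually prove this lemma: it is imported from a private communication of Fishman, Simmons, and Urbanski, described only as an adaptation of ``the construction used in Bishop and Jones.'' Your Moran-type tree construction driven by the divergence of the Poincar\'e series at exponent $\delta-\epsilon$ is exactly that intended route, and the outer layers of your sketch (consistency of $\mu_\epsilon$, conicality of the points of $K_\epsilon$, the Moran regularity estimate, uniform perfectness) are standard and sound once the branching lemma is granted.

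However, one step inside your branching lemma fails as written, and it is the mathematical heart of the matter rather than ``geometric bookkeeping.'' You claim the transplanted family $\{Sh_o(B(\gamma\eta o,r_0)): d(o,\eta o)\le L\}$ sits inside $b(\gamma)$ with $(\delta-\epsilon)$-power total $\asymp r^{\delta-\epsilon}\sum_{d(o,\eta o)\le L}e^{-(\delta-\epsilon)d(o,\eta o)}$. But the comparison $\mathrm{radius}\left(Sh_o(B(\gamma\eta o,r_0))\right)\asymp r\,e^{-d(o,\eta o)}$ and the containment in $b(\gamma)$ hold only when the geodesic $[o,\gamma\eta o]$ passes within bounded distance of $\gamma o$, i.e.\ when $d(o,\gamma\eta o)\ge d(o,\gamma o)+d(o,\eta o)-O(1)$; for $\eta$ with $\gamma\eta o$ on the $o$-side of $\gamma o$ (e.g.\ $\eta$ near $\gamma^{-1}$) both assertions are false. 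So you may only capture the ``forward'' part of the truncated orbit, and you must then prove that the Poincar\'e series restricted to the cone below $\gamma o$ still diverges at exponent $\delta-\epsilon$, uniformly in $\gamma$. This does not follow from equivariance, the shadow comparison, and bounded multiplicity alone (a priori the $(\delta-\epsilon)$-mass near scale $L$ could concentrate in the backward direction for particular $\gamma$); it is precisely the content of the key lemma in Bishop--Jones and needs its own argument. A secondary, easily repaired flaw: forcing the exact identity $\sum_i r_{S_i}^{\delta-\epsilon}=r^{\delta-\epsilon}$ by shrinking the last ball can create a child of radius $\ll c_L r$, and a point of $K_\epsilon$ lying in such a child defeats the \emph{lower} power-law bound at scales between the tiny child and its parent; it is better to settle for $r^{\delta-\epsilon}\le\sum_i r_{S_i}^{\delta-\epsilon}\le C r^{\delta-\epsilon}$ with all radii in $[c_L r, r/2]$ and define $\mu_\epsilon$ by normalized weights.
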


We can now furnish the proof of our claim in the introduction:
\begin{Cor} For a non-elementary geometrically finite Kleinian group $\Gamma$ with critical exponent $\delta$, the set of badly approximable points (equivalently the set of bounded geodesic rays) has dimension $\delta$. 
\end{Cor}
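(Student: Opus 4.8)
The plan is to sandwich $\dim_H BA(\Gamma)$ between $\delta$ and $\delta$: the upper bound comes for free from the inclusion $BA(\Gamma)\subset\lg$, and the lower bound comes from winning together with the Fishman--Simmons--Urbanski lemma.

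For the upper bound, observe that $BA(\Gamma)\subset\lg$, and that for a geometrically finite Kleinian group it is classical (Sullivan, Stratmann--Velani; it is also a special case of \cite{BJ}) that $\dim_H\lg=\delta$, the critical exponent. Hence $\dim_H BA(\Gamma)\le\delta$. In fact a bounded geodesic ray stays in a compact part of $\h^{n+1}/\Gamma$, hence returns to a compact set infinitely often and so terminates at a conical limit point; thus $BA(\Gamma)$ is contained in the radial limit set, which is what makes the resulting equality a genuine reproof of the Bishop--Jones theorem in this setting.

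For the lower bound, fix $\epsilon>0$ and apply the lemma of Fishman, Simmons, and Urbanski above to obtain a uniformly perfect subset $K_\epsilon$ of the conical limit set supporting a measure $\mu_\epsilon$ satisfying a power law with exponent $\delta-\epsilon$. Since $K_\epsilon$ is uniformly perfect, Lemma \ref{diffuse is u.p.} shows $K_\epsilon$ is $\beta$-diffuse for $\beta$ small, hence diffuse as a subset of $S^n$ and a fortiori as a subset of $\lg$. By the previous corollary, $BA(\Gamma)$ is absolutely winning in $\lg$; applying Theorem \ref{diffuse} with $X=\lg$, $S=BA(\Gamma)$, and $E=K_\epsilon\subset\lg$, we conclude that $BA(\Gamma)\cap K_\epsilon$ is absolutely winning in $K_\epsilon$, hence winning in $K_\epsilon$ by Lemma \ref{abswin} (diffuseness of $K_\epsilon$ guarantees Bob always has a legal reply, which is exactly what that proof requires). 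Now Lemma \ref{geom dim}, applied with $K=K_\epsilon$ and $\mu=\mu_\epsilon$, gives $\dim_H(BA(\Gamma)\cap K_\epsilon)\ge\delta-\epsilon$, so $\dim_H BA(\Gamma)\ge\delta-\epsilon$. Letting $\epsilon\to0$ yields $\dim_H BA(\Gamma)\ge\delta$, and combined with the upper bound, $\dim_H BA(\Gamma)=\delta$.

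The bookkeeping points — that diffuseness of $K_\epsilon$ in $S^n$ descends to diffuseness in the smaller ambient space $\lg$ (immediate, since shrinking the ambient space only weakens the ball Alice is allowed to delete), and that "absolutely winning $\Rightarrow$ winning" survives being played on the closed subset $K_\epsilon$ — are routine. The one genuinely nontrivial ingredient is the Fishman--Simmons--Urbanski lemma itself, which circumvents the failure (visible from the global measure formula of \cite{SV}) of the Patterson--Sullivan measure to obey a power law near parabolic points by producing, for each $\epsilon$, an almost full-dimensional uniformly perfect piece of the radial limit set on which a clean power law \emph{does} hold; granting that, the argument above is a short assembly of the machinery of $\S2$ and $\S4$.
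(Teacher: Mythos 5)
Your proof is correct and follows essentially the same route as the paper: upper bound from containment in the conical limit set, lower bound by combining the Fishman--Simmons--Urbanski sets $K_\epsilon$ with diffuseness (Lemma \ref{diffuse is u.p.}), Theorem \ref{diffuse}, and Lemma \ref{geom dim}, then letting $\epsilon\to 0$. The extra bookkeeping you include (diffuseness descending to the smaller ambient space, absolutely winning implying winning on $K_\epsilon$) is consistent with, and slightly more explicit than, the paper's argument.
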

\begin{proof}The upper bound $\delta$ holds because $BA(\Gamma)$ is a subset of the conical limit set, whose Hausdorff dimension is at most $\delta$, \cite{N}. Conversely, the measure $\mu_\epsilon$ supported on $K_\epsilon$ satisfies a power law with exponent $\delta - \epsilon$, and hence by Lemma \ref{geom dim} any winning subset of $K_\epsilon$ has dimension at least $\delta - \epsilon$. However, because $K_\epsilon$ is diffuse in $\Lambda(\Gamma)$ and $BA(\Gamma)$ is absolutely winning on $\Lambda(\Gamma)$, by Theorem \ref{diffuse}, $BA(\Gamma) \cap K_\epsilon$ is absolutely winning as a subset of $K_\epsilon$. Therefore, by Lemma \ref{geom dim}, the dimension of $BA(\Gamma) \cap K_\epsilon$ is at least $\delta - \epsilon$, and hence the same holds for $BA(\Gamma)$. Letting $\epsilon \to 0$ completes the proof.
\end{proof}
\end{section}

\end{document}